\let\origsection=\section \def\section{\@ifstar{\origsection*}{\mysection}}
\def\mysection{\@startsection{section}{1}\z@{.7\linespacing\@plus\linespacing}{.5\linespacing}{\normalfont\scshape\centering\S}}
\definecolor{codelightgray}{gray}{0.8}
\definecolor{codeverylightgray}{gray}{0.9}
\renewcommand{\PrintDOI}[1]{\doi{#1}}
\DeclareRobustCommand{\rn}[1]{  {\fontencoding{OT2}\selectfont#1}}
\numberwithin{equation}{section}
\numberwithin{figure}{section}
\def\rmlabel{\upshape({\itshape \roman*\,})}
\def\alabel{\upshape({\itshape \alph*\,})}
\let\polishlcross=\l
\def\l{\ifmmode\ell\else\polishlcross\fi}
\def\paragraph#1{	\noindent\textbf{#1.}\enspace}
\let\setminus=\smallsetminus
\let\sm=\setminus
\def\moverlay{\mathpalette\mov@rlay}
\def\mov@rlay#1#2{\leavevmode\vtop{   \baselineskip\z@skip \lineskiplimit-\maxdimen
		\ialign{\hfil$\m@th#1##$\hfil\cr#2\crcr}}}
\newcommand{\charfusion}[3][\mathord]{
	#1{\ifx#1\mathop\vphantom{#2}\fi
		\mathpalette\mov@rlay{#2\cr#3}
	}
	\ifx#1\mathop\expandafter\displaylimits\fi}
\newcommand{\dcup}{\charfusion[\mathbin]{\cup}{\cdot}}
\newcommand{\bigdcup}{\charfusion[\mathop]{\bigcup}{\cdot}}
\DeclareFontFamily{U}  {MnSymbolC}{}
\DeclareSymbolFont{MnSyC}         {U}  {MnSymbolC}{m}{n}
\DeclareFontShape{U}{MnSymbolC}{m}{n}{
	<-6>  MnSymbolC5
	<6-7>  MnSymbolC6
	<7-8>  MnSymbolC7
	<8-9>  MnSymbolC8
	<9-10> MnSymbolC9
	<10-12> MnSymbolC10
	<12->   MnSymbolC12}{}
\DeclareMathSymbol{\powerset}{\mathord}{MnSyC}{180}
\let\epsilon=\varepsilon
\let\eps=\epsilon
\let\rho=\varrho
\let\theta=\vartheta
\let\wh=\widehat
\def\FF{{\mathds F}}
\def\ZZ{{\mathds Z}}
\def\CC{{\mathds C}}
\theoremstyle{plain}
\newtheorem{thm}{Theorem}[section]
\newtheorem{theorem}[thm]{Theorem}
\newtheorem{prop}[thm]{Proposition}
\newtheorem{proposition}[thm]{Proposition}
\newtheorem{claim}[thm]{Claim}
\newtheorem{lemma}[thm]{Lemma}
\theoremstyle{definition}
\newtheorem{rem}[thm]{Remark}
\newtheorem{dfn}[thm]{Definition}
\newtheorem{definition}[thm]{Definition}
\let\lra=\longrightarrow
\let\phi=\varphi
\DeclareSymbolFont{stmry}{U}{stmry}{m}{n}
\DeclareMathSymbol\arrownot\mathrel{stmry}{"58}
\DeclareMathSymbol\Arrownot\mathrel{stmry}{"59}
\def\Sym{\mathrm{Sym}}
\def\sfr{\mathrm{sf}}
\def\SFR{\mathrm{SF}}
\def\SFRR{\widetilde{\mathrm{SF}}}
\let\vn=\varnothing
\let\wh=\widehat
\def\Real{\mathrm{Re\,\,}}
\begin{document}
\title[Large sum-free sets in finite vector spaces I]{Large sum-free sets in finite vector spaces I.}
\author[Christian Reiher]{Christian Reiher}
\address{Fachbereich Mathematik, Universit\"at Hamburg, Hamburg, Germany}
\email{christian.reiher@uni-hamburg.de }

\author[Sofia Zotova]{Sofia Zotova}
\address{Mathematisches Institut, Universit\"at Bonn, Bonn, Germany}
\email{s87szoto@uni-bonn.de}
\subjclass[2010]{11B13, 11B30, 11P70}
\keywords{sum-free sets, finite vector spaces}

\begin{abstract}
	Let $p$ be a prime number with $p\equiv 2\pmod{3}$ and 
	let $n\ge 1$ be a dimension.
	It is known that a sum-free subset of $\FF_p^n$ can 
	have at most the size $\frac13(p+1)p^{n-1}$ and that, up to 
	automorphisms of $\FF_p^n$, the only extremal example is 
	the `cuboid' 
	$\bigl[\frac{p+1}3, \frac{2p-1}3\bigr]\times \FF_p^{n-1}$.
	
	For $p\ge 11$ we show that if a sum-free subset of $\FF_p^n$ 
	is not contained in such an extremal one, then its size is 
	at most $\frac13(p-2)p^{n-1}$. This bound is optimal and we 
	classify the extremal configurations. The remaining cases $p=2, 5$
	are known to behave differently. For $p=3$ the analogous question 
	was solved by Vsevolod Lev, and for $p\equiv 1\pmod{3}$ it is 
	less interesting. 
\end{abstract}
	
\maketitle
	
\section{Introduction}

A subset $A$ of an abelian group $G$ is said to be {\it sum-free} 
if the equation $x+y=z$ has no solution with $x, y, z\in A$. The 
study of this concept was begun more than a century ago by 
Schur~\cite{Schur}, who famously showed that the set of positive
integers cannot be expressed as a union of finitely many sum-free 
sets. From the 1960's onwards, a lot of activity in the subject was 
stimulated by the problems and results of Erd\H{o}s 
(see, e.g.,~\cite{E65} and the survey~\cite{TV17} 
by Tao and~Vu). Let us write $\sfr_0(G)$ for the largest possible 
size of a sum-free subset of a given finite abelian group~$G$. 
The basic question to determine this invariant of~$G$ turned out 
to be surprisingly difficult; it was solved by Green 
and Ruzsa~\cite{Gr05}.

\begin{theorem}[Green and Ruzsa]
    Let $G$ be a (nontrivial) finite abelian group. 
        \begin{enumerate}[label=\rmlabel]
       \item If $|G|$ has a prime factor $p$ with $p\equiv 2\pmod{3}$, 
       		then $\sfr_0(G)=\frac{p_0+1}{3p_0}|G|$ holds for the 
				least such prime factor $p_0$. 
       \item If the previous clause does not apply, but $|G|$ 
       		is divisible by $3$, then $\sfr_0(G)=\frac13 |G|$.
       \item If all prime factors $p$ of $|G|$ satisfy 
       		$p\equiv 1\pmod{3}$ and $m$ denotes the largest integer 
				such that $G$ possesses an element of order $m$, then 
				$\sfr_0(G)=\frac{m-1}{3m}|G|$.    
	\end{enumerate}
\end{theorem}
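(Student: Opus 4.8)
The plan is to prove, for each of the three cases, that the claimed quantity is both a lower and an upper bound for $\sfr_0(G)$, the upper bound by induction on $|G|$. For the lower bounds I would pull back a sum-free set along a surjective homomorphism onto a suitable cyclic group, using that the preimage of a sum-free set under a group homomorphism is again sum-free. If $|G|$ has a prime factor $\equiv 2\pmod{3}$ and $p_0$ is the least such, then $p_0\mid|G|$, so $G$ admits a surjection onto $\ZZ/p_0\ZZ$; pulling back the middle third $\{x:p_0/3<x<2p_0/3\}$, which is sum-free and has exactly $\tfrac{p_0+1}{3}$ elements because $p_0\equiv2\pmod{3}$, gives a sum-free subset of $G$ of size $\tfrac{p_0+1}{3p_0}|G|$. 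If $3\mid|G|$ but no prime factor is $\equiv2\pmod{3}$, pulling back $\{1\}\subseteq\ZZ/3\ZZ$ gives size $\tfrac13|G|$. If every prime factor of $|G|$ is $\equiv1\pmod{3}$, then so is $m$, and pulling back the middle third of $\ZZ/m\ZZ$ along the projection of $G$ onto its largest cyclic quotient $\ZZ/m\ZZ$ gives size $\tfrac{m-1}{3m}|G|$.

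For the upper bound, let $A\subseteq G$ be sum-free with density $\alpha=|A|/|G|$. Since $A$ is sum-free there are no triples $(x,y,z)\in A^3$ with $x+y=z$; expanding this count over the characters of $G$ and invoking Parseval's identity $\sum_\gamma|\wh{1_A}(\gamma)|^2=\alpha$, I would deduce that some nontrivial character $\gamma$ satisfies $|\wh{1_A}(\gamma)|\ge\alpha^2$. Writing $d$ for the order of $\gamma$, the character $\gamma$ factors through a surjection $G\to\ZZ/d\ZZ$ with kernel $H$, and $d$ divides the exponent $m$ of $G$. Since $|G|$ and $m$ have exactly the same prime factors, every prime divisor of $d$ divides $|G|$; in particular any prime factor of $d$ that is $\equiv2\pmod{3}$ must be $\ge p_0$.

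Next I would analyse $A$ through the decomposition $A=A_0\cup A_1\cup\dots\cup A_{d-1}$ into its pairwise disjoint intersections with the cosets of $H$. Sum-freeness forces strong restrictions on the pieces: $A_0$ is sum-free in $H$, and whenever $A_{i+j}\ne\emptyset$ one has $|A_i|+|A_j|\le|H|$ — otherwise, by the pigeonhole principle inside a single $H$-coset, $A_i+A_j$ would fill that coset and hence meet $A_{i+j}$. Feeding these restrictions, together with the inequality $|\wh{1_A}(\gamma)|\ge\alpha^2$ and the inductive hypothesis applied either to the proper subgroup $H$ or to the proper quotient $\ZZ/d\ZZ$, should yield $\alpha\le\tfrac1d\,\sfr_0(\ZZ/d\ZZ)$. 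It then remains to observe that, over all $d\mid m$, the quantity $\tfrac1d\sfr_0(\ZZ/d\ZZ)$ equals the value claimed in the theorem: one uses the cyclic-group formula — classical, due to Diananda and Yap, and recovered anyway as the base of the induction, where $G\cong\ZZ/p\ZZ$ has prime order and the bound follows directly from the Cauchy--Davenport inequality — together with the monotonicity of $t\mapsto\tfrac{t+1}{3t}$ (decreasing) and $t\mapsto\tfrac{t-1}{3t}$ (increasing), and the fact that the extremal modulus, namely $p_0$, $3$, respectively $m$, always divides $m$.

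The hard part is the structural step. A single large Fourier coefficient used naively is far too lossy to recover the sharp constants $\tfrac{p_0+1}{3p_0}$ and $\tfrac{m-1}{3m}$; one must exploit the coset decomposition in earnest, controlling the sumsets $A_i+A_j$ inside the $H$-cosets with Kneser-type inequalities and their stability (Vosper-type) refinements, and separate the case where $A$ is spread over many cosets of $H$, in which one descends to $\ZZ/d\ZZ$, from the case where it is concentrated on few of them, in which one restricts to a proper subgroup. Making this dichotomy quantitative enough to close the induction — and, relatedly, pinning down the cyclic case for composite moduli, where Cauchy--Davenport alone does not give the sharp bound — is where the real work lies.
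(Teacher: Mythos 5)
The paper does not prove this theorem; it is quoted as background and attributed to Green and Ruzsa \cite{Gr05}, so there is no ``paper's own proof'' to compare against. Evaluated on its own, your proposal is not a proof but an outline, and you say so yourself in the final paragraph. The lower-bound construction by pulling back sum-free subsets of $\ZZ/p_0\ZZ$, $\ZZ/3\ZZ$, or $\ZZ/m\ZZ$ along the relevant surjection is correct, standard, and complete. The upper bound, however, is sketched at a level that leaves every genuinely difficult step undone.

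Concretely, three gaps would have to be filled. First, the claim that the structural analysis ``should yield $\alpha\le\frac1d\sfr_0(\ZZ/d\ZZ)$'' is not something you can hope to prove for the order $d$ of whichever character happens to carry the large Fourier coefficient: you do not control $d$, and for an unfavorable divisor (e.g.\ $d=7$ in $G=\ZZ/21\ZZ$, where $\frac1d\sfr_0(\ZZ/d\ZZ)=\frac27<\frac13=\mu(G)$) that inequality would already contradict the existence of the extremal example, so the argument cannot go through in the stated form. What one actually has to show is that either $d$ is favorable or the set is concentrated and one can descend to the subgroup $H$; making this dichotomy quantitative and closing the induction in both branches is exactly the content of Green and Ruzsa's proof, and it occupies most of their paper, including a separate and more combinatorial treatment of type~III groups. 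Second, the cyclic case for composite moduli (Diananda--Yap) is not subsumed by the base case of your induction on $|G|$, which only gives prime order; you acknowledge this but it still needs a proof. Third, a minor point: the single-coefficient bound $|\wh{1_A}(\gamma)|\ge\alpha^2$ discards the sign; the version actually used in the literature (and as Lemma~\ref{lem:four} of this paper) is $\Real\wh{A}(\gamma)\le -|A|^2/(|G|-|A|)$, and the negativity of the real part is typically needed to extract sharp constants. In short, the strategy you describe is pointed in the right direction, and the reduction $\max_{d\mid m}\frac1d\sfr_0(\ZZ/d\ZZ)=\mu(G)$ that you verify is a correct and useful observation, but the ``hard part'' you defer is essentially the entire theorem.
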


Building upon earlier work of Lev, \L uczak, and Schoen~\cite{LLS},
Green and Ruzsa also showed that the number of sum-free subsets 
of $G$, that is the cardinality of 
\[
	\SFR_0(G)=\{A\subseteq G\colon (A+A)\cap A=\vn\}\,,
\]
is given by 
\[
	 |\SFR_0(G)|=2^{\sfr_0(G)+o(|G|)}\,.
\]
Perhaps surprisingly, the problem to classify 
\[
	\SFRR_0(G)=\{A\in \SFR_0(G)\colon |A|=\sfr_0(G)\}\,,
\]
i.e., the sum-free subsets of $G$ of maximal size, was 
solved only less than a decade ago by Balasubramanian, 
Prakash, and Ramana~\cite{Ba16}.  
 
There are quite a few results asserting that if a sum-free subset $A$
of a specific group $G$ has size close to $\sfr_0(G)$, then it is 
contained (or almost contained) in a member of $\SFRR_0(G)$. 
Vsevolod Lev initiated a systematic quantitative 
study of this phenomenon for elementary abelian 
$p$-groups~\cites{VL05, VL23}. 
Slightly more generally, we propose to study the following hierarchy 
that starts with $\SFR_0(G)$, $\sfr_0(G)$, and $\SFRR_0(G)$. 

\begin{definition}\label{dfn:traum}
    Given a finite abelian group $G$ we define by recursion 
    on $k\ge 1$ the sets and numbers 
        \begin{itemize}
    		\item $\SFR_k(G)=\{A\in \SFR_{k-1}(G)\colon \text{ there
				is no $B\in\SFRR_{k-1}(G)$ with $A\subseteq B$}\}$,
        \item $\sfr_k(G)=\max\{|A|\colon A\in \SFR_k(G)\}$,
        \item and $\SFRR_k(G)
        	=\{A\in \SFR_k(G)\colon |A|=\sfr_k(G)\}$.
    \end{itemize}
\end{definition}

We are especially interested in the case that $k=1$ and 
$G=\FF_p^n$ is a vector space over the field $\FF_p$, where $p$ 
denotes a prime number. For $p=2$ it follows from the work of 
Davydov and Tombak~\cite{DT} that 
\[
	\sfr_1(\FF_2^n)
	=
	\begin{cases}
		0 & \text{ if } n\le 3 \cr
		5\cdot 2^{n-4} & \text{ if } n\ge 4\,.
	\end{cases}
\]
The case $p=3$ was solved by Lev~\cite{VL05}, who showed 
\[
	\sfr_1(\FF_3^n)
	=
	\begin{cases}
		0 & \text{ if } n\le 2 \cr
		5\cdot 3^{n-3} & \text{ if } n\ge 3\,.
	\end{cases}
\]

Recently Lev began working on the most difficult case, namely $p=5$.
In~\cite{VL23} he obtained 
$5^{n-1}\le \sfr_1(\FF_5^n)<\frac32\cdot 5^{n-1}$. Shortly afterwards 
Versteegen~\cite{LV23} improved these bounds 
to $28\cdot 5^{n-3}\le \sfr_1(\FF_5^n)\le 6\cdot 5^{n-1}$ 
(for $n\ge 3$), and in~\cite{RZ24b} we proved that the lower bound 
is sharp. More precisely, we have 
\[
	\sfr_1(\FF_5^n)
	=
	\begin{cases}
		0 & \text{ if } n=1 \cr
		5 & \text{ if } n=2 \cr
		28\cdot 5^{n-3} & \text{ if } n\ge 3\,.
	\end{cases}
\]

The remaining prime numbers fall into two classes depending on 
whether they are congruent to $1$ or $2$ modulo $3$. The former 
case being less interesting (as we explain in~\S\ref{sec:final}),
we focus on the latter one here. 
Despite the considerable complexity of the 
case $p=5$ it turns out that primes $p\ge 11$ with $p\equiv 2\pmod{3}$ 
admit a uniform treatment (see Theorem~\ref{thm:SF1}). 
In fact, we will also describe the corresponding extremal sets 
in $\SFRR_1(\FF_p^n)$.

Let us call two subsets of an abelian group $G$ {\it isomorphic} if there 
is an automorphism of $G$ sending one to the other. By an early observation 
of Yap~\cite{Yap}, if $p\equiv 2\pmod{3}$ and $n\ge 1$, the class $\SFRR_0(\FF_p^n)$
consists of all sets isomorphic to $[\frac{p+1}3, \frac{2p-1}3]\times \FF_p^{n-1}$.
Now we introduce a class of subsets of $\FF_p^n$ that will be shown to 
be $\SFRR_1(\FF_p^n)$, when $p\ge 11$. 
 
\begin{dfn}\label{dfn:0117}
	Let $p=6m-1\ge 11$ be a prime number and $n\ge 1$.
	A set $A\subseteq \FF_p^n$ is said to be {\it very structured} 
	if there exists a (possible empty) set $P\subseteq \FF_p^{n-1}$ 
	such that $0\not\in P+P$ and~$A$ is isomorphic to 
		\begin{align*}
		\{(2m-1, 0)\}
		\dcup 
		\{2m\}\times (\FF_p^{n-1}\sm P)
		&\dcup 
		[2m+1, 4m-3]\times\FF_p^{n-1} \\
		&\dcup 
		\{4m-2\}\times (\FF_p^{n-1}\sm\{0\})
		\dcup \{4m-1\}\times P\,.
	\end{align*}
		Furthermore, we call a set $A\subseteq \FF_p^n$ {\it structured} 
	if there are a dimension $\ell\in [n]$ and a very structured 
	set $B\subseteq \FF_p^\ell$ such that $A$ is isomorphic 
	to $B\times \FF_p^{n-\ell}$.
\end{dfn}

Notice that in the special case $n=1$ we are forced to take $P=\vn$,
so that a subset of~$\FF_p$ is structured if and only if it is 
isomorphic to $[2m-1, 4m-3]$. In general, one confirms easily 
that all structured sets $A\subseteq \FF_p^n$ have the same size 
$|A|=(2m-1)p^{n-1}$; moreover, all structured subsets of $\FF_p^n$
are sum-free. In fact, it could easily be shown that they are  
maximal sum-free sets with respect to inclusion, but we shall 
phrase our arguments in such a way that it is not necessary 
to verify this directly. Let us now state our main result. 

\begin{theorem}\label{thm:SF1}
    If $p\ge 11$ is a prime number satisfying $p\equiv 2\pmod{3}$ 
    and $n\geq 1$, then 
        \[
        \sfr_1(\FF_p^n)=\frac{p-2}{3}\cdot p^{n-1}\,.
    \]
        Furthermore, $\SFRR_1(\FF_p^n)$ is the collection of all 
    structured subsets of $\FF_p^n$. 
\end{theorem}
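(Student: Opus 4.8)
Throughout write $p=6m-1$, so that the members of $\SFRR_0(\FF_p^n)$ are the sets $\phi^{-1}(I)$ with $\phi\colon\FF_p^n\to\FF_p$ a surjective linear form and $I$ a dilate of $[2m,4m-1]$, and the structured sets are the sets $B\times\FF_p^{n-\ell}$ with $B\subseteq\FF_p^\ell$ very structured. We first settle the lower bound and half of the classification by showing that every structured set $A$ lies in $\SFR_1(\FF_p^n)$; as $|A|=\tfrac{p-2}3 p^{n-1}$ this gives $\sfr_1(\FF_p^n)\ge\tfrac{p-2}3 p^{n-1}$ and puts the structured sets into $\SFRR_1(\FF_p^n)$ once the upper bound is known. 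We may assume $A=B\times\FF_p^{n-\ell}$ with $B$ very structured. Sum-freeness of $A$ reduces, since sums in the $\FF_p^{n-\ell}$-factor are unconstrained, to that of $B$, and a run through the possible first coordinates of a hypothetical solution of $x+y=z$ in $B$ leaves only $x,y\in\{4m-1\}\times P$, $z=(2m-1,0)$, which is excluded by $0\notin P+P$. For the non-containment in any member of $\SFRR_0(\FF_p^n)$ we use $p\ge 11$: then $[2m+1,4m-3]\ne\vn$, so $A$ contains the affine hyperplane $\{2m+1\}\times\FF_p^{n-1}$, whence $A\subseteq\phi^{-1}(I)$ forces $\phi$ to be a scalar multiple of the first-coordinate form (otherwise $\phi$ would already be onto $\FF_p$ on that hyperplane). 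Thus $A\subseteq\phi^{-1}(I)$ makes the interval $J$ of first coordinates occurring in $B$ lie in a dilate of $[2m,4m-1]$; but $J$ is one of $[2m-1,4m-1]$, $[2m-1,4m-2]$, $[2m-1,4m-3]$, the first having $2m+1>2m$ elements, and an interval of more than $m$ elements can sit in a $2m$-element dilate of $[2m,4m-1]$ only as a block of consecutive terms, forcing that dilate to equal $[2m,4m-1]$ and $J$ to be a sub-interval of it — which none of the three is. Hence $A\in\SFR_1(\FF_p^n)$.

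The matching upper bound and the other half of the classification follow from the assertion, proved by induction on $n\ge 1$, that \emph{every sum-free $A\subseteq\FF_p^n$ with $|A|\ge\tfrac{p-2}3 p^{n-1}$ is either contained in a member of $\SFRR_0(\FF_p^n)$ or structured}: applied to a maximal member of $\SFR_1(\FF_p^n)$ — which, by definition, lies in no member of $\SFRR_0$ — it yields $\sfr_1(\FF_p^n)=\tfrac{p-2}3 p^{n-1}$ and identifies $\SFRR_1(\FF_p^n)$ with the collection of structured sets. The base case $n=1$ asserts that a sum-free $A\subseteq\FF_p$ with $|A|\ge 2m-1$ is contained in a dilate of $[2m,4m-1]$ or is a dilate of $[2m-1,4m-3]$; since $A+A\subseteq\FF_p\setminus A$ gives $|A+A|\le p-|A|$, the set $A$ has doubling $|A+A|-2|A|+1\le 3$, so by the structure theory of small-doubling sets in $\FF_p$ (Vosper's theorem and its Freiman-type refinement) $A$ lies in a short interval, after which a direct check of the sum-free sets of size $2m-1$ that can occur produces exactly the two alternatives.

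For the inductive step, let $n\ge 2$ and let $A\subseteq\FF_p^n$ be sum-free with $|A|\ge\tfrac{p-2}3 p^{n-1}$ and contained in no member of $\SFRR_0(\FF_p^n)$. For a surjective linear form $\phi$ with kernel $K\cong\FF_p^{n-1}$ and a section $v$ with $\phi(v)=1$, put $B_t=\{w\in K\colon tv+w\in A\}$ and $\beta_t=|B_t|$; sum-freeness gives $(B_s+B_t)\cap B_{s+t}=\vn$ for all $s,t\in\FF_p$, and in particular $\beta_s+\beta_t>p^{n-1}$ forces $B_s+B_t=K$ and hence $\beta_{s+t}=0$. Set $T_\phi=\{t\colon\beta_t>0\}$ and $F_\phi=\{t\colon\beta_t=p^{n-1}\}$; then $F_\phi$ is a sum-free subset of $\FF_p$ not containing $0$ and $(F_\phi+T_\phi)\cap T_\phi=\vn$. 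If $|F_\phi|\ge 2m$ for some $\phi$, then $F_\phi$ is a largest sum-free subset of $\FF_p$, hence a dilate of $[2m,4m-1]$, and $F_\phi\cup(F_\phi+F_\phi)=\FF_p$ forces every fibre to be full or empty, so $A=\phi^{-1}(F_\phi)$ is a cuboid — contrary to hypothesis. If $|T_\phi|\le 2m-1$ for some $\phi$, then $|A|\le|T_\phi|\,p^{n-1}$ forces $|T_\phi|=2m-1$, $F_\phi=T_\phi$ and $|A|=\tfrac{p-2}3 p^{n-1}$, so $A=\phi^{-1}(T_\phi)$ with $T_\phi$ sum-free of size $2m-1$, and the case $n=1$ shows $A$ is a cuboid or a structured set with $\ell=1$. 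We may therefore assume the \emph{main case}: $|T_\phi|\ge 2m$ and $|F_\phi|\le 2m-1$ for every surjective $\phi$.

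The main case is the crux and the principal obstacle. If $A$ has a nontrivial period (a subgroup $V$ with $A+V=A$), one passes to $\FF_p^n/V$, where the image of $A$ is sum-free and still large enough relative to the quotient for the inductive hypothesis in lower dimension to apply, and concludes that $A$, being a pullback, is a cuboid or structured; so assume $A$ is aperiodic. The plan is then a tight stability analysis: combining the relations $(B_s+B_t)\cap B_{s+t}=\vn$ with Kneser's theorem in $K$, and using as a counting aid that a linear hyperplane $H$ with $|A\cap H|\ge\tfrac{p-2}3 p^{n-2}$ is (by induction) contained in a cuboid of $H$ or structured and hence atypical, one forces, for a well-chosen direction $\phi$, the profile $(\beta_t)$ into the very-structured shape: $2m-3$ consecutive full fibres, flanked by fibres of sizes $1$ and $p^{n-1}-|P|$ on one side and $p^{n-1}-1$ and $|P|$ on the other, and empty elsewhere. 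The inductive hypothesis applied to these four non-full fibres then identifies them as $\{0\}$, $\FF_p^{n-1}\setminus P$, $\FF_p^{n-1}\setminus\{0\}$ and $P$, with the condition $0\notin P+P$ emerging precisely as the compatibility requirement, so that $A$ is very structured. The obstacle is the absence of slack: every sumset inequality in play is attained with equality by the cuboid and by the structured sets, so one must track the deficiencies $p^{n-1}-\beta_t$ over all directions $\phi$ simultaneously and rule out a whole range of near-extremal profiles that are neither cuboids nor structured. That is where the bulk of the proof lies.
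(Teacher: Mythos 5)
Your opening reductions are fine and broadly parallel the paper: the $n=1$ case is the paper's Lemma~\ref{lem:6}, and the observation that structured sets lie in $\SFR_1(\FF_p^n)$ and have size $\frac{p-2}{3}p^{n-1}$ is the paper's Lemma following Definition~\ref{dfn:0117} (though the paper's non-containment argument via a linear form $\lambda$ with $\lambda[A]\subseteq[2m,4m-1]$ is cleaner than your ``interval inside a dilate'' count, which as written glosses over which arithmetic progressions of difference~$1$ can embed in a length-$2m$ progression of arbitrary difference modulo~$p$). Your fibre setup $B_t$, the dichotomy based on $T_\phi$ and $F_\phi$, and the disposal of the cases $|F_\phi|\ge 2m$ and $|T_\phi|\le 2m-1$ are all sound.

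But the proof has a genuine gap, and you name it yourself: the ``main case'' ($|T_\phi|\ge 2m$ and $|F_\phi|\le 2m-1$ for every surjective $\phi$) is only described as a ``plan'' for a stability analysis that you do not carry out. This is not a technicality you can wave at --- it is the entire content of the theorem, and the difficulty you identify (all the sumset inequalities are tight for both the cuboid and the structured sets, so there is no slack to exploit directionally) is precisely why a naive induction on $n$ does not close. The paper does not attempt such an induction. Instead it handles the main case in two moves. First (\S\ref{sec:smp}), it shows via Kneser's theorem that if $A\subseteq I\times\FF_p^{n-1}$ with $|I|\le p-3$ then $A$ is already structured (Proposition~\ref{prop:22}), so one may assume at most two fibres $A_k$ are empty; and it proves (Proposition~\ref{prop:23}, via the same Claim~\ref{clm:1} machinery) that the fibre sizes $|A_k|$ then have small $L^1$-deviation from a constant, $\sum_k||A_k|-U|\le 2p^{n-1}$. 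Second (\S\ref{sec:2}), it brings in a Fourier-analytic lower bound: a sum-free $A$ admits a nontrivial character $\gamma$ with $\Real\wh A(\gamma)\le -|A|^2/(p^n-|A|)$ (Lemma~\ref{lem:four}), and after aligning $\gamma$ with the fibering direction, this lower bound on $|\wh A(\gamma)|$ contradicts the $L^1$-smallness of the fibre-size fluctuations when $p\ge 17$; the boundary case $p=11$ needs the separate, sharper Proposition~\ref{prop:24} (using Lemma~\ref{lem:ABCD}). So the crucial missing idea in your proposal is the combination of a one-scale $L^1$ concentration estimate for the fibre sizes with a Fourier lower bound for sum-free sets; without something of this flavor the stability analysis you sketch has no foothold, for exactly the reason you articulate.
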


We would like to record that the upper bound 
$\sfr_1(\FF_p^n)<(\frac p3-\frac16+\frac1p)p^{n-1}$ was proved 
earlier by Versteegen~\cite{LV23}.
An essentially equivalent and, perhaps, more transparent version 
of our theorem reads as follows. 

\begin{theorem}\label{thm:2}
	Suppose that $p\ge 11$ is a prime number 
	satisfying $p\equiv 2\pmod{3}$ and $n\geq 1$.
	If $A\subseteq \FF_p^n$ is a sum-free set of 
	size $|A|\ge \frac13(p-2)p^{n-1}$, then either $A$ is isomorphic 
	to a subset 
	of $\bigl[\frac{p+1}3, \frac{2p-1}3\bigr]\times \FF_p^{n-1}$ 
	or $A$ is structured. 
\end{theorem}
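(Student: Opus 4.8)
The plan is to prove Theorem~\ref{thm:2} by induction on $n$; it implies Theorem~\ref{thm:SF1}, since structured sets have size exactly $\tfrac13(p-2)p^{n-1}$ whereas subsets of a cuboid can be strictly larger. Throughout write $p=6m-1$ and $q=p^{n-1}$. In the inductive step we may assume $\tfrac13(p-2)q\le|A|<\sfr_0(\FF_p^n)=\tfrac13(p+1)q$, the equality case being Yap's theorem; and we may assume $\mathrm{Stab}(A)=\{0\}$, because if a line $L$ stabilises $A$ then $A$ is the full preimage of a sum-free subset of $\FF_p^n/L\cong\FF_p^{n-1}$ of size $\ge\tfrac13(p-2)p^{n-2}$, for which the inductive hypothesis gives one of the two conclusions, and both lift back to $A$.

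\emph{Base case $n=1$.} We must classify the sum-free sets $A\subseteq\FF_p$ with $|A|\ge\tfrac13(p-2)=2m-1$; the size $2m$ case is Yap's theorem. For $|A|=2m-1$, sum-freeness gives $|A+A|\le p-|A|=4m$, while $|A+A|\ge 2|A|-1$ by Cauchy--Davenport, so by Vosper's theorem and the $\FF_p$-analogue of Freiman's $3k-4$ theorem (the small primes $p=11,17$ being checked by hand) $A$ lies in an arithmetic progression of length $\le|A+A|-|A|+1\le 2m+1$. After an automorphism this progression is an interval $[c,c+2m]$; deleting the at most two missing points and imposing sum-freeness is a finite check showing that $A$ is isomorphic to a subinterval of $[2m,4m-1]$ or to $[2m-1,4m-3]$, the unique very structured subset of $\FF_p$.

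\emph{Localisation (the crux).} For $n\ge 2$ the main task is to show that $A$ is supported on few parallel hyperplanes: there are a nonzero functional $\phi\colon\FF_p^n\to\FF_p$ and an arithmetic progression $Q\subseteq\FF_p$ with $|Q|\le 2m+1$ and $A\subseteq\phi^{-1}(Q)$. This is the Fourier-analytic heart of the argument, refining Versteegen's method~\cite{LV23}. Counting Schur triples in $A$ (of which there are none) forces $\mathrm{Re}\,\widehat{1_A}(\xi_0)\le-|A|^2/p^n$ for some $\xi_0\ne 0$, and for $|A|$ close to $\tfrac13(p+1)q$ this is a fixed proportion of $|A|$. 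One then shows --- and this is where more than Versteegen's bound is needed --- that the nonzero Fourier mass off the line $\FF_p\xi_0$ is small compared with $|A|(p^n-|A|)$, so that with $H:=\xi_0^\perp$ the coset counts $a_t:=|A\cap\phi^{-1}(t)|$ satisfy $\sum_t a_t(q-a_t)=O(q^2)$. Hence all but $O(1)$ fibres are almost full or almost empty; the almost-full fibres form a sum-free subset of $\FF_p$ (because $a_s+a_t>q$ gives $A_s+A_t=\FF_p^{n-1}$ and hence $a_{s+t}=0$), so by the case $n=1$ they sit inside an interval of length $\le 2m$, and comparing every other nonempty fibre against the difference set of that interval traps the whole support inside a progression $Q$ with $|Q|\le 2m+1$. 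After an automorphism we take $\phi$ to be the first coordinate and $Q=[c,c+2m]$, whence $\sum_{t=c}^{c+2m}(q-a_t)\le 2q$.

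\emph{Defect analysis and conclusion.} It remains to determine how a deficiency of at most $2q$ can be distributed over $2m+1$ consecutive fibres of a sum-free set. Apply Kneser's theorem in $\FF_p^{n-1}$ to the relations $A_{s+t}\cap(A_s+A_t)=\vn$: whenever $a_s+a_t>q$ one gets $a_{s+t}=0$, and when $a_s+a_t$ is only slightly smaller, $A_s+A_t$ omits at most a coset-union of a proper subspace, so $A_{s+t}$ lies in such a set. Since no length-$(2m+1)$ interval of $\FF_p$ is sum-free, at least one such collision occurs for every admissible window $[c,c+2m]$; tracking its consequences --- using $\mathrm{Stab}(A)=\{0\}$ and the constraints linking each $A_t$ to $A_{2t}$ to exclude degenerate fibre distributions, and using $A_{4m-1}+A_{4m-1}\subseteq\FF_p^{n-1}\sm A_{2m-1}$ (after normalising $A_{2m-1}$ to $\{0\}$ by a shear) to force $0\notin P+P$ for $P:=A_{4m-1}$ --- one finds that, up to the reflection $x\mapsto -x$, either the support already fits into a length-$2m$ window and $A$ embeds into a dilate of $\bigl[\tfrac{p+1}3,\tfrac{2p-1}3\bigr]\times\FF_p^{n-1}$, so $A$ is isomorphic to a subset of the cuboid; or the fibre profile is exactly the one of Definition~\ref{dfn:0117}, so $A$ is very structured, hence structured. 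The principal obstacle is the localisation step: upgrading the soft estimate ``$|A|\approx\tfrac13 p^n$'' to the rigid fact that $A$ occupies at most $2m+1$ parallel hyperplanes is precisely where a hypothetical spread-out near-extremal configuration would have to be excluded, and Kneser's theorem applied to $A+A$ inside $\FF_p^n$ alone falls just short of the needed threshold, so the finer Fourier information is indispensable.
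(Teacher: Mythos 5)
The central step you call ``Localisation'' is where this proposal has a genuine gap. You assert that from the single bad Fourier coefficient $\Real \wh A(\xi_0)\le -|A|^2/(p^n-|A|)$ ``one then shows'' that the Fourier mass off the line $\FF_p\xi_0$ is small, so that the fibre sizes $a_t$ become bimodal and trap $A$ inside $\le 2m+1$ parallel hyperplanes. No argument is offered for that off-line $L^2$ smallness, and it is not a consequence of the single-coefficient estimate: that coefficient accounts for only a bounded fraction of $|A|(p^n-|A|)$, and excluding the possibility that the remaining Fourier energy is spread over characters off $\FF_p\xi_0$ is essentially the whole difficulty. In fact the paper never proves any such localisation. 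It dichotomises instead: Proposition~\ref{prop:22} shows, via Kneser's theorem applied fibrewise through Lemma~\ref{lem:42}, that \emph{if} at least three fibres are empty then $A$ is already structured; and Propositions~\ref{prop:23} and~\ref{prop:24} show that if at most two fibres are empty then the fibre sizes are \emph{approximately constant} (not bimodal), which then contradicts the Fourier bound~\eqref{eq:2114}. So in the paper the combinatorics does the concentration work and the Fourier input only delivers the final contradiction; your proposal inverts this and asks Fourier analysis to carry a load it is not shown to carry. Note also that for $p=11$ the roughly-constant-fibre estimate of Proposition~\ref{prop:23} is too weak and the paper requires the separate, much more delicate Proposition~\ref{prop:24} built on Lemma~\ref{lem:ABCD}; your sketch makes no provision for this boundary case beyond the $n=1$ hand check, which is not where the difficulty lives.

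Two smaller remarks. In the base case your appeal to an $\FF_p$-analogue of Freiman's $3k-4$ theorem is heavier machinery than needed: the paper's Lemma~\ref{lem:6} treats $|A|=2m-1$ by applying Vosper to the pair $A$ and $X=A\cup(-A)$, which is elementary and sidesteps any rectifiability hypotheses that would need verification in $\FF_p$. Your reduction to $\mathrm{Stab}(A)=\{0\}$ is sound (structured sets lift to structured sets under the quotient by a stabiliser line), and the defect analysis you outline for $A\subseteq[2m-1,4m-1]\times\FF_p^{n-1}$ is in the spirit of Lemma~\ref{prop:21}; those parts are fine. It is the unproven localisation claim that must either be carried out or replaced by the paper's case split.
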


\begin{rem}
Our Definition~\ref{dfn:traum} is inspired by an analogous 
definition of Polcyn and Ruci\'nski~\cite{Joanna} in hypergraph
Tur\'an theory. For matchings of size two their {\it higher 
order Tur\'an numbers} provide a common framework for the well-known 
theorem of Erd\H{o}s-Ko-Rado~\cite{EKR} and the subsequent 
works of Hilton-Milner~\cite{HM} as well as Han-Kohayakawa~\cite{HK}, 
which roughly correspond to our $\sfr_1(\cdot)$ and $\sfr_2(\cdot)$, 
respectively.    
\end{rem}

\subsection*{Notation}
For every positive integer $n$ we denote $\{1,\dots,n\}$ by~$[n]$.
If $A$ and $B$ are subsets of the same abelian group $G$ we set
$A+B=\{a+b\colon a\in A \text{ and } b\in B\}$. 
The symbol $\dcup$ indicates disjoint unions. 

\section{Preliminaries} \label{sec:prelim}
We start by recalling a few well-known results belonging to additive 
combinatorics and deriving some easy consequences from them. 
First, we quote a theorem due to Cauchy~\cite{Cau13}, which 
was rediscovered independently by Davenport~\cites{Dav35, Dav47}.

\begin{theorem}[Cauchy-Davenport]\label{thm:CD}
    If $p$ denotes a prime number and $A, B\subseteq\FF_p$ are nonempty, then     
        \[
    	\pushQED{\qed}
    	|A+B|\geq\min\{p,|A|+|B|-1\}\,. \qedhere
		\popQED
	 \]
	 \end{theorem}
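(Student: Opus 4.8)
The plan is to follow the classical route via Davenport's \emph{$e$-transform}, reducing the statement by induction on $|A|$ to the trivial case $|A|=1$. I first dispose of the case $|A|+|B|>p$, where I claim $A+B=\FF_p$: for any $c\in\FF_p$ the sets $A$ and $c-B$ lie in $\FF_p$ with $|A|+|c-B|=|A|+|B|>p$, so they meet, and any $a\in A\cap(c-B)$ gives $c=a+(c-a)\in A+B$ with $c-a\in B$. Hence $|A+B|=p=\min\{p,|A|+|B|-1\}$ here.

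From now on assume $|A|+|B|\le p$; I must show $|A+B|\ge|A|+|B|-1$, by induction on $|A|$. The base case $|A|=1$ is immediate since then $|A+B|=|B|$. For the inductive step suppose $|A|\ge 2$, and for $e\in\FF_p$ set $A_e=A\cap(B+e)$ and $B_e=B\cup(A-e)$. Two elementary checks are needed: first, $A_e+B_e\subseteq A+B$, because $A_e+B\subseteq A+B$ and, since $A_e\subseteq B+e$ forces $A_e-e\subseteq B$, also $A_e+(A-e)\subseteq A+B$; second, by inclusion--exclusion $|B_e|=|B|+|A|-|A\cap(B+e)|=|B|+|A|-|A_e|$, so that $|A_e|+|B_e|=|A|+|B|$.

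The heart of the argument is to locate an $e$ with $\vn\neq A_e\subsetneq A$. If no such $e$ exists, then whenever $A_e\neq\vn$ --- i.e.\ whenever $e\in A-B$ --- one must have $A\subseteq B+e$; fixing $a_0\in A$ and letting $e$ range over $a_0-B$ yields $(A-a_0)+B\subseteq B$. As $|A-a_0|\ge 2$, some $g\in (A-a_0)\sm\{0\}$ satisfies $g+B\subseteq B$, hence $g+B=B$ by finiteness, and since a nonzero element generates the group $\FF_p$ this forces $B=\FF_p$, contradicting $|A|+|B|\le p$ together with $|A|\ge2$. Given such an $e$, apply the induction hypothesis to $(A_e,B_e)$ --- legitimate since $|A_e|<|A|$ and $|A_e|+|B_e|=|A|+|B|\le p$ --- to obtain $|A+B|\ge|A_e+B_e|\ge|A_e|+|B_e|-1=|A|+|B|-1$, which closes the induction.

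I expect the transform bookkeeping to be entirely routine; the step deserving real attention is the ``bad case'' analysis, which crucially exploits that $\FF_p$ has no proper nontrivial subgroup. A shorter but less self-contained alternative would be the polynomial method: supposing $|A+B|\le|A|+|B|-2\le p-2$, pick $C\supseteq A+B$ with $|C|=|A|+|B|-2$ and observe that $f(x,y)=\prod_{c\in C}(x+y-c)$ vanishes on $A\times B$, while its coefficient of $x^{|A|-1}y^{|B|-1}$ equals $\binom{|A|+|B|-2}{|A|-1}\not\equiv0\pmod p$, contradicting the Combinatorial Nullstellensatz.
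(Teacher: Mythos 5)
The paper does not prove Theorem~\ref{thm:CD} at all: it is quoted as a classical result of Cauchy and Davenport, with the \texttt{\textbackslash qed} placed directly in the statement to signal that no proof is supplied. Your argument is therefore a genuine addition rather than an alternative to something in the text, and it is correct. The $e$-transform bookkeeping is right: $A_e+B_e\subseteq A+B$ follows from $A_e-e\subseteq B$, and the bijection $x\mapsto x+e$ between $B\cap(A-e)$ and $A_e=A\cap(B+e)$ gives $|A_e|+|B_e|=|A|+|B|$, so the induction on $|A|$ is well founded once you locate $e$ with $\vn\neq A_e\subsetneq A$. Your ``bad case'' analysis is the crux and it is handled properly: if $A\subseteq B+e$ for every $e\in A-B$, then fixing $a_0\in A$ and varying $e$ over $a_0-B$ yields $(A-a_0)+B\subseteq B$, and a nonzero $g\in A-a_0$ with $g+B=B$ forces $B=\FF_p$ since $\FF_p$ has no proper nontrivial subgroup, contradicting $|A|+|B|\le p$ with $|A|\ge 2$. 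The preliminary reduction to $|A|+|B|\le p$ via the pigeonhole intersection of $A$ and $c-B$ is also correct, and it is in fact the same observation that the paper records separately as Lemma~\ref{Bdumm}. Your sketch of the Combinatorial Nullstellensatz alternative is likewise sound (the top homogeneous part of $f$ is $(x+y)^{|A|+|B|-2}$, and $\binom{|A|+|B|-2}{|A|-1}\not\equiv 0\pmod p$ because $|A|+|B|-2<p$). Both routes are standard; the transform proof has the advantage of being self-contained and elementary, while the polynomial method is shorter once the Nullstellensatz is available.
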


The cases of equality were completely determined by Vosper~\cites{Vos56a, Vos56b}. 
Here we only require the main case of his characterisation.  

\begin{theorem}[Vosper]\label{thm:Vosper}
    Given a prime number $p$ let $A, B\subseteq\FF_p$ be two sets 
    such that $|A|,|B|\ge 2$ and $|A+B|\le p-2$. 
    If $|A+B|\le |A|+|B|-1$, then $A$ and $B$ are two arithmetic 
    progressions with the same common difference. \qed
\end{theorem}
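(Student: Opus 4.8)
\noindent\textit{Proof proposal.}
The plan is to prove Vosper's theorem through a direct analysis of the equality case in Cauchy--Davenport (Theorem~\ref{thm:CD}), running an induction on $\mu=\min\{|A|,|B|,|C|\}$, where throughout $C:=\FF_p\sm(A+B)$. Three reductions will be used without comment: translating $A$ and $B$ separately (this changes neither $|A|$, $|B|$, $|A+B|$ nor the property of being an arithmetic progression); applying a common dilation $x\mapsto\lambda x$ (which takes arithmetic progressions to arithmetic progressions and only rescales the common difference); and interchanging the roles of $A$, $B$ and $-C$. The last reduction rests on the identity $C-A=\FF_p\sm B$: for $c\in C$, $a\in A$ one has $c-a\notin B$, so $C-A\subseteq\FF_p\sm B$, and since Cauchy--Davenport gives $|C-A|\ge|C|+|A|-1=p-|B|$ this inclusion is an equality; symmetrically $C-B=\FF_p\sm A$, so each of the pairs $(A,B)$, $(C,-A)$, $(C,-B)$ is a critical pair with the same triple of parameters $|A|,|B|,|C|$. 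From $|A+B|=|A|+|B|-1\le p-2$ we obtain $|C|=p-|A|-|B|+1\ge 2$, hence $\mu\ge 2$; this is the one point where the hypothesis ``$\le p-2$'' rather than ``$\le p-1$'' enters, and it is exactly what lets the induction begin (for $|A+B|=p-1$ the conclusion is false).

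By the interchange reduction we may assume $\mu=|B|$ with $2\le|B|\le|A|$. In the base case $\mu=2$ we have, after translating, $B=\{0,d\}$ with $d\ne0$, so $A+B=A\cup(A+d)$ has size $|A|+1$. Writing $A=Q_1\dcup\dots\dcup Q_r$ as the disjoint union of its maximal arithmetic progressions of common difference $d$ — none of which wraps all the way around $\FF_p$, since $|A|<p$ — one checks that $A\cup(A+d)$ is $A$ together with the $r$ distinct points obtained by translating the top endpoints of the $Q_i$ by $d$, so that $|A\cup(A+d)|=|A|+r$ and therefore $r=1$: the set $A$ is a single $d$-progression, as is $B$. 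The very same run-counting yields the \textbf{Fact} that whenever $A$ is a $d$-progression and $|A+B|=|A|+|B|-1$, the set $B$ is a $d$-progression with the same difference; in view of this Fact (together with the interchange reduction) every inductive step reduces to showing that $A$ is an arithmetic progression.

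For the inductive step let $|A|,|B|,|C|\ge 3$ with $|B|=\mu\le|A|$, and apply the Dyson $e$-transform $A':=A\cup(B+e)$, $B':=B\cap(A-e)$, which for every $e$ satisfies $A'+B'\subseteq A+B$ and $|A'|+|B'|=|A|+|B|$, hence also $|\FF_p\sm(A'+B')|\ge|C|$. Suppose first that $e$ can be chosen so that $2\le|B'|\le|B|-1$. Then $|B'|\le|A|\le|A'|$ and $|B'|<|B|\le|C|\le|\FF_p\sm(A'+B')|$, so the new triple of parameters has minimum $|B'|<\mu$; moreover $A'+B'\subseteq A+B$ while Cauchy--Davenport gives $|A'+B'|\ge|A'|+|B'|-1=|A+B|$, so $A'+B'=A+B$ and the pair $(A',B')$ is critical with sumset of size $\le p-2$. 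The induction hypothesis now makes $A'$ and $B'$ arithmetic progressions with a common difference $d'$, so $A+B=A'+B'$ is a $d'$-progression of length $|A|+|B|-1\le p-2$. It remains to recover $A$ and $B$ from their sumset: after normalising $d'=1$ and arranging $0\in A$, $0\in B$ and $A+B=[0,m-1]$ with $m=|A|+|B|-1$, so that $A,B\subseteq[0,m-1]$, one locates $\max A$ and $\max B$ and compares $\max A+\max B$ with $(|A|-1)+(|B|-1)=m-1$; this forces $A=[0,|A|-1]$ and $B=[0,|B|-1]$ unless $A+B$ wraps around modulo $p$, and the latter possibility has to be excluded by a separate, slightly fiddly argument that uses the $\ge 2$ residues missing from $A+B$.

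I expect the main obstacle to be the degenerate case of the $e$-transform, in which no admissible $e$ exists because $|B\cap(A-e)|\in\{0,1,|B|\}$ for every $e$. Here the identity $\sum_e|B\cap(A-e)|=|A|\,|B|$, combined with $|A-B|\ge|A|+|B|-1$, forces $A$ to be the union of very few translates of $B$ together with at most a couple of further points, and this must be analysed by hand; the worst sub-case is the symmetric one, in which $A$ is a single translate of $B$ and one is reduced to showing that $|B+B|=2|B|-1\le p-2$ implies $B$ is an arithmetic progression — an instance that either yields to a further $e$-transform or has to be settled directly. Neither this degenerate analysis nor the ``no wrap-around'' point in the previous paragraph is conceptually deep, but handling them carefully is where the length of a full proof would go.
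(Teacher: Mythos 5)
The paper does not prove Vosper's theorem: the terminal \verb|\qed| signals that the result is quoted from Vosper's original papers~\cites{Vos56a,Vos56b}, so there is no in-paper argument to compare against. Your proposal therefore has to stand or fall on its own.

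The overall strategy is sound and is in fact one of the classical routes (Dyson $e$-transform plus induction on $\min\{|A|,|B|,|C|\}$). The interchange reduction via $C-A=\FF_p\sm B$ and $C-B=\FF_p\sm A$ is correctly derived, the observation that the hypothesis $|A+B|\le p-2$ is exactly what guarantees $|C|\ge2$ (and that the theorem is genuinely false for $|A+B|=p-1$, e.g.\ by taking $B=\FF_p\sm(c-A)$ for arbitrary $A$) is correct, and the base case $\mu=2$ together with the ``Fact'' is fine, although the Fact as stated needs one extra line: if $A$ is a $d$-progression $\{a,a+d,\dots,a+(\ell-1)d\}$ with $\ell\ge2$, one should look at the chain of sub-progressions $A^{(k)}=\{a,\dots,a+kd\}$ and use Cauchy--Davenport to force $|A^{(1)}+B|=|B|+1$, which reduces to the base case; your ``run-counting'' phrasing glosses over the possibility that consecutive translates $A+R_i$ merge.

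There are, however, two genuine gaps, and you flag both yourself. First, in the non-degenerate inductive step you recover $A$ and $B$ from the fact that $A+B$ is a $d'$-progression by normalising and comparing $\max A+\max B$ with $m-1$; as you note, this requires a separate wrap-around argument, and as written it is not complete. It can be avoided entirely: once $A+B$ is known to be a $d'$-progression, so is $C=\FF_p\sm(A+B)$, with $|C|\ge2$; since $(C,-A)$ and $(C,-B)$ are critical pairs, the Fact applied with $C$ in the role of the known progression immediately makes $A$ and $B$ into $d'$-progressions, and no max/min bookkeeping is needed. Second, and more seriously, the degenerate case of the $e$-transform --- when $|B\cap(A-e)|\in\{0,1,|B|\}$ for every $e$ --- is left explicitly unresolved. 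This is not a peripheral technicality: it is precisely where the combinatorial content of the $e$-transform proof lies. The identity $\sum_e|B\cap(A-e)|=|A||B|$ together with $|A-B|\ge|A|+|B|-1$ does constrain the situation, but the sub-case you single out (essentially $|B+B|=2|B|-1$ with $B+B$ not covering $\FF_p$) is the crux, and saying that it ``either yields to a further $e$-transform or has to be settled directly'' is an acknowledgement that the proof is not finished rather than a proof. Until that sub-case is handled --- e.g.\ by the argument in Vosper's addendum~\cite{Vos56b}, or via a symmetry-group/Kneser-type analysis --- the proposal is an outline rather than a proof.
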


Next we introduce Kneser's theorem. Given a finite nonempty 
subset~$X$ of an abelian group~$G$ we write  
\[
	\Sym(X)=\{g\in G\colon g+X=X\}
\]
for the so-called {\it symmetry group} of $X$. The statement that 
follows appears implicitly for cyclic groups in~\cite{Kn53}; in full 
generality it is stated explicitly in~\cite{Kn55}.

\begin{theorem}[Kneser]\label{Kneser}
    If $A$, $B$ are two finite nonempty subsets of an abelian 
    group~$G$ and $K=\Sym(A+B)$, then 
        \[
    			\pushQED{\qed}
        |A+B|\geq|A+K|+|B+K|-|K|\ge |A|+|B|-|K|\,. \qedhere
          \popQED
    \]
    \end{theorem}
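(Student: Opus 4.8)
The statement above is Kneser's classical addition theorem; below I sketch the shape of the standard argument one would give for it. The plan is in two stages: first reduce the strong form to the case of a \emph{trivial} symmetry group, then settle that case by induction using the Dyson transform.

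\emph{Stage 1 (reduction).} Put $K=\Sym(A+B)$ and let $\pi\colon G\to G/K$ be the quotient map. Since $A+B$ is a union of $K$-cosets one has $|A+B|=|K|\cdot|\pi(A)+\pi(B)|$, while $|A+K|=|K|\cdot|\pi(A)|$ and $|B+K|=|K|\cdot|\pi(B)|$; and a short check shows $\Sym(\pi(A)+\pi(B))$ is trivial in $G/K$. Hence the claimed inequality $|A+B|\ge|A+K|+|B+K|-|K|$ is equivalent to the assertion that $|X+Y|\ge|X|+|Y|-1$ whenever $X,Y$ are finite nonempty subsets of an abelian group with $\Sym(X+Y)=\{0\}$; the weaker bound $|A+B|\ge|A|+|B|-|K|$ then follows from $|A+K|\ge|A|$ and $|B+K|\ge|B|$.

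\emph{Stage 2 (induction via the Dyson transform).} I would prove the trivial-symmetry assertion by induction on $|Y|$, the case $|Y|=1$ being immediate. For $|Y|\ge2$, for each $g\in G$ set $X_g=X\cup(g+Y)$ and $Y_g=(X-g)\cap Y$; then $X_g+Y_g\subseteq X+Y$, $|X_g|+|Y_g|=|X|+|Y|$, and $Y_g\subseteq Y$. Using $\Sym(X+Y)=\{0\}$ and $|Y|\ge2$ one shows that some $g$ gives $\varnothing\neq Y_g\subsetneq Y$ — otherwise $Y-Y\subseteq\Sym(X)$, which forces $X+Y$ to be a translate of $X$ and $\Sym(X+Y)=\Sym(X)\neq\{0\}$, a contradiction. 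Among all such $g$ I would pick one maximising $|\Sym(X_g+Y_g)|$; if this maximum equals $1$, the induction hypothesis applied to $(X_g,Y_g)$ gives $|X+Y|\ge|X_g+Y_g|\ge|X_g|+|Y_g|-1=|X|+|Y|-1$, and we are done.

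\emph{The main obstacle.} The genuinely hard case is when \emph{every} reducing transform inflates the symmetry group to some nontrivial $H'=\Sym(X_g+Y_g)$; this is where Kneser's theorem has its real content. One then exploits the $H'$-periodicity of $X_g+Y_g\subseteq X+Y$, descending to the quotient $G/H'$ and using the maximality of $H'$, to show that the periodicity propagates back far enough either to produce a nonzero element of $\Sym(X+Y)$ (contradiction) or to reduce to a strictly smaller instance. Arranging this case analysis so that the overall recursion is well-founded is the delicate part, and it is precisely what separates Kneser's theorem from the elementary Cauchy--Davenport bound of Theorem~\ref{thm:CD} — which, incidentally, is just the case $G=\FF_p$, where $\Sym(A+B)$ is forced to be $\{0\}$ or all of $\FF_p$. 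For the present paper only the statement itself is needed; it will be applied to sumsets $A_s+A_t$ of ``slices'' $A_s,A_t\subseteq\FF_p^{n-1}$ of a large sum-free set.
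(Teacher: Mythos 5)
You have the right reduction and the right tool. Stage~1 (passing to $G/K$) is correct, and in Stage~2 the Dyson transform $X_g=X\cup(g+Y)$, $Y_g=(X-g)\cap Y$ does satisfy $X_g+Y_g\subseteq X+Y$, $|X_g|+|Y_g|=|X|+|Y|$, and $Y_g\subseteq Y$; your argument that some $g$ gives $\varnothing\neq Y_g\subsetneq Y$ (otherwise $Y-Y\subseteq\Sym(X)$ and hence $\Sym(X+Y)\neq\{0\}$) is also correct. But you then explicitly set aside the case in which \emph{every} strictly reducing transform produces a nontrivial $\Sym(X_g+Y_g)$, calling it ``the delicate part''. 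That case is not a loose end — it is the entire content of Kneser's theorem. Up to that point everything you have written already follows from, and in fact essentially re-derives, Cauchy--Davenport-style reasoning; the step that distinguishes Kneser's theorem is precisely the one you describe in prose (``exploit the $H'$-periodicity, descend to $G/H'$, show the periodicity propagates back'') without carrying it out. As it stands the induction is not closed, so this is a plan for a proof rather than a proof.

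For calibration: the paper itself gives no proof of Theorem~\ref{Kneser}. It is stated with a \verb|\qed| and a citation to Kneser's original papers and is used purely as a black box, the only point of contact with the rest of the argument being Lemma~\ref{lem:42} (and, through it, \S\ref{sec:smp}). So ``matching the paper's proof'' is moot; the question is whether your sketch stands on its own, and by your own account it does not. To complete it you would need the counting step that handles a transform with $H'=\Sym(X_g+Y_g)\neq\{0\}$: one compares $|X+Y|$ with $|X_g+Y_g|$ by accounting for the $H'$-cosets that meet $X+Y$ but are not wholly contained in $X_g+Y_g$, and feeds the resulting estimate back into the induction. This bookkeeping is where the real work lies and cannot be waved through.
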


Since $|A+K|+|B+K|$ and $|G|$ are always multiples of $|K|$, 
this has the following consequence, which could also be shown 
in a significantly easier way. 

\begin{lemma}\label{Bdumm}
	If $A$ and $B$ are two subsets of an abelian group $G$ 
	such that $|A|+|B|>|G|$, then $A+B=G$. \qed
\end{lemma}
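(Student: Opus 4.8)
The plan is to give the one-line pigeonhole argument, which does not need Kneser's theorem at all. Fix an arbitrary $g\in G$; the goal is to write $g=a+b$ with $a\in A$ and $b\in B$. To this end consider the set $g-B=\{g-x\colon x\in B\}$. Since the map $x\mapsto g-x$ is an involution of $G$, we have $|g-B|=|B|$, and therefore $|A|+|g-B|=|A|+|B|>|G|$. Two subsets of a finite set whose sizes add up to more than the size of the ambient set cannot be disjoint, so there is some $c\in A\cap(g-B)$. Writing $c=g-b$ with $b\in B$ gives $g=c+b\in A+B$. As $g\in G$ was arbitrary, $A+B=G$.

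Alternatively, and in the spirit of the remark immediately preceding the statement, one can read the lemma off Theorem~\ref{Kneser} directly. Assume for contradiction that $A+B\neq G$ and put $K=\Sym(A+B)$. Then $A+B$ is a nonempty proper union of cosets of $K$, so $|A+B|\le|G|-|K|$. But Kneser's theorem yields $|A+B|\ge|A+K|+|B+K|-|K|\ge|A|+|B|-|K|>|G|-|K|$, a contradiction.

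There is no real obstacle to overcome; the only minor points to watch are that $x\mapsto g-x$ is genuinely a bijection of $G$ (it is its own inverse) and that the hypothesis $|A|+|B|>|G|$ already forces $G$ to be finite, so everything is well posed. I would present the pigeonhole proof as the main argument, since it is shorter and entirely self-contained, and mention the Kneser-based derivation only parenthetically.
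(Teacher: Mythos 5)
Your proposal is correct, and both of your arguments are sound. The paper does not spell out a proof of Lemma~\ref{Bdumm}; it simply states that the lemma follows from Theorem~\ref{Kneser} because $|A+K|+|B+K|$ and $|G|$ are multiples of $|K|$, and explicitly remarks that the statement ``could also be shown in a significantly easier way.'' Your second, parenthetical argument is essentially the derivation the paper has in mind: assuming $A+B\ne G$ and setting $K=\Sym(A+B)$, one compares the Kneser lower bound $|A+B|\ge|A|+|B|-|K|>|G|-|K|$ with the fact that $A+B$ is a proper union of $K$-cosets and hence has size at most $|G|-|K|$. Your primary argument, the pigeonhole proof via the sets $A$ and $g-B$, is precisely the ``significantly easier way'' the authors allude to but do not write out: it is entirely elementary and needs none of the Kneser machinery. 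The trade-off is simply one of context versus economy — the paper has just proved Kneser's theorem, so deriving this as a one-line corollary costs nothing there, while your pigeonhole argument is the natural choice when stating the lemma in isolation. Your remark that the hypothesis $|A|+|B|>|G|$ already forces $G$ to be finite is a correct and worthwhile observation to include.
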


Here is a less superficial application of Kneser's theorem. 

\begin{lemma}\label{lem:42}
	Let $A$, $B$, $C$ be three nonempty subsets of the finite vector 
	space $\FF_p^\ell$, where $p$ is prime and $\ell\ge 1$. 
	If $A+B$ is disjoint to $C$, 
	then $|A|+|B|+|C|\le (p+1)p^{\ell-1}$.
	
	Moreover, if $|A|+|B|+|C|>(p^2+1)p^{\ell-2}$, then there are a
	direct sum decomposition $\FF_p^\ell=K\oplus L$ and 
	sets $A_\star, B_\star, C_\star\subseteq L$ such that $\dim(L)=1$, 
	\begin{enumerate}[label=\rmlabel]
		\item\label{it:42i} $X\subseteq K+X_\star$ 
				for all $X\in \{A, B, C\}$, 
		\item\label{it:42ii} $|A_\star|+|B_\star|+|C_\star|=p+1$,
		\item\label{it:42iii} and $L=(C_\star-A_\star)\dcup B_\star$.
	\end{enumerate}
\end{lemma}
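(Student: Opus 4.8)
The plan is to prove the bound $|A|+|B|+|C|\le(p+1)p^{\ell-1}$ by induction on $\ell$, and in the course of the induction extract the extremal structure needed for the second half. For $\ell=1$ the hypothesis says $(A+B)\cap C=\vn$, so $|A+B|\le p-|C|$; combined with Cauchy–Davenport (Theorem~\ref{thm:CD}) this gives $|A|+|B|-1\le|A+B|\le p-|C|$, i.e. $|A|+|B|+|C|\le p+1$, which is the $\ell=1$ instance of both the inequality and (with $K=0$, $L=\FF_p$, $A_\star=A$ etc.) the structural conclusion. For the inductive step I would invoke Kneser's theorem (Theorem~\ref{Kneser}) applied to $A+B$ in $G=\FF_p^\ell$. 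Let $K=\Sym(A+B)$; since $G$ is an elementary abelian $p$-group, $K$ is a subspace, say of dimension $d$. If $d=\ell$ then $A+B=G$, contradicting $C\ne\vn$ being disjoint from it, so $d\le\ell-1$. Passing to the quotient $\overline G=G/K\cong\FF_p^{\ell-d}$, the images $\overline A=A+K$, $\overline B=B+K$, $\overline C$ (with $\overline C$ any element-wise enlargement of the image of $C$ to a union of $K$-cosets that still avoids $\overline A+\overline B=(A+B)/K$) satisfy $\overline A+\overline B$ disjoint from $\overline C$ in $\overline G$; apply the inductive hypothesis there and multiply back up by $|K|=p^d$, using Kneser's lower bound $|A+B|\ge|A+K|+|B+K|-|K|$ to control the loss. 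The one case this recursion does not immediately cover is when $K$ is already trivial, $d=0$: then Kneser just gives $|A+B|\ge|A|+|B|-1$, so again $|A|+|B|+|C|\le p^\ell-|C|+\ldots$; here one must argue more carefully, and this is where a genuine idea is needed.

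For the $d=0$ case the natural move is a Freiman-type / compression argument along a line: choose a hyperplane $H\le\FF_p^\ell$ and a complement, write each of $A,B,C$ as a union of at most $p$ slices indexed by $\FF_p\cong\FF_p^\ell/H$, and compare the slice-sum-sets using the one-dimensional Cauchy–Davenport bound on the index $\FF_p$ together with the inductive bound inside $H\cong\FF_p^{\ell-1}$. Concretely, if $A_i,B_j,C_k$ denote the slices, then for each $k$ with $C_k\ne\vn$ we need $A_i+B_j$ (inside $H$) to miss $C_k$ whenever $i+j=k$ in the index; summing the $(\ell-1)$-dimensional inequality $|A_i|+|B_j|+|C_k|\le(p+1)p^{\ell-2}$ over a Cauchy–Davenport matching of indices, and being a little clever about which index-triples to sum over (so that each slice of $A$, $B$, $C$ is counted the right number of times), should yield $|A|+|B|+|C|\le(p+1)p^{\ell-1}$. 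I expect the clean bookkeeping here to mimic the standard proof that the Cauchy–Davenport bound tensorizes, and the right statement to iterate is exactly the three-set form given in the lemma, so the induction is self-supporting.

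Finally, for the moreover-part I would trace through when equality-like behaviour forces structure. Suppose $|A|+|B|+|C|>(p^2+1)p^{\ell-2}$. Running the induction, this is so large that the loss terms in Kneser cannot be afforded unless the relevant sum-set is, up to a codimension-one subspace, essentially a single coset-interval; more precisely one shows the symmetry group $K=\Sym(A+B)$ must have dimension exactly $\ell-1$, so that $\overline G=G/K$ is one-dimensional, $L$ is a complementary line, and $A,B,C$ are each contained in a union of $K$-cosets, i.e. $X\subseteq K+X_\star$ for the projections $X_\star\subseteq L$ — that is conclusion~\ref{it:42i}. In the quotient line $L\cong\FF_p$ we then have the $\ell=1$ situation with $|A_\star|+|B_\star|+|C_\star|>p-1$... wait, we need $=p+1$: the hypothesis $|A|+|B|+|C|>(p^2+1)p^{\ell-2}$ divided by $p^{\ell-1}$ forces $|A_\star|+|B_\star|+|C_\star|>p+1/p$, hence $\ge p+1$ as these are integers bounded above by $p+1$ from the first part applied in $L$; this gives~\ref{it:42ii}. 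Equality $|A_\star|+|B_\star|+|C_\star|=p+1$ with $(A_\star+B_\star)\cap C_\star=\vn$ means $C_\star$ is exactly the complement of $A_\star+B_\star$ in $L$, and since we are at equality in Cauchy–Davenport, $|A_\star+B_\star|=|A_\star|+|B_\star|-1$, whence $A_\star+B_\star$ and $C_\star$ partition $L$ as $(C_\star-A_\star)\dcup B_\star$ after translating — that is conclusion~\ref{it:42iii}. The main obstacle is making the $d=0$ base-of-recursion argument and the "large sum forces $\dim K=\ell-1$" dichotomy genuinely watertight rather than hand-wavy; everything after that is just unwinding the one-dimensional equality case of Cauchy–Davenport.
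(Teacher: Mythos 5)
Your proposal has the right tools in hand (Kneser and Cauchy--Davenport) and part of the structural analysis is on track, but the inductive framework you build is both unnecessary and, as set up, incomplete. The paper's proof is a direct, one-shot argument: put $K=\Sym(A+B)$; since $A+B$ is a union of $K$-cosets it is disjoint from $C+K$, so
\[
p^\ell\;\ge\;|A+B|+|C+K|\;\ge\;|A+K|+|B+K|+|C+K|-|K|\;\ge\;|A|+|B|+|C|-|K|\,,
\]
and since $K$ is a proper subspace (else $A+B=\FF_p^\ell$ would force $C=\vn$) we have $|K|\le p^{\ell-1}$, giving $|A|+|B|+|C|\le p^\ell+p^{\ell-1}=(p+1)p^{\ell-1}$ with no case distinction whatsoever. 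In particular the case you single out as the ``base of recursion'', $\Sym(A+B)=0$, is \emph{not} where a genuine idea is needed: there Kneser just reduces to Cauchy--Davenport, $|A+B|\ge|A|+|B|-1$, and combined with disjointness $|A+B|\le p^\ell-|C|$ you immediately get $|A|+|B|+|C|\le p^\ell+1\le(p+1)p^{\ell-1}$, which is the finished estimate, not the start of one. The Freiman-type slice-and-compress argument you then sketch to fill that ``gap'' is considerably harder than the lemma itself and is never carried out; as written the proof of the first inequality is incomplete precisely at the point you yourself flag.

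For the ``moreover'' part your outline is closer to what the paper does, but two points need tightening. First, the claim that the hypothesis $|A|+|B|+|C|>(p^2+1)p^{\ell-2}$ forces $\dim K=\ell-1$ should be derived from the displayed inequality above, which gives $|K|\ge|A|+|B|+|C|-p^\ell>p^{\ell-2}$, hence $|K|\ge p^{\ell-1}$; you gesture at this (``the loss terms in Kneser cannot be afforded'') without the calculation. Second, your derivation of conclusion \ref{it:42iii} via ``equality in Cauchy--Davenport, whence $A_\star+B_\star$ and $C_\star$ partition $L$ as $(C_\star-A_\star)\dcup B_\star$ after translating'' is not correct as stated: the partition in \ref{it:42iii} is not a translate of $(A_\star+B_\star)\dcup C_\star$, and no translation is involved. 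The clean route is to observe that $(A_\star+B_\star)\cap C_\star=\vn$ is equivalent to $(C_\star-A_\star)\cap B_\star=\vn$, and then Cauchy--Davenport together with \ref{it:42ii} gives $|C_\star-A_\star|+|B_\star|\ge|A_\star|+|C_\star|-1+|B_\star|=p$, so the two disjoint sets exhaust $L$.
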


\begin{proof}
	We consider $K=\Sym(A+B)$. By the definition of symmetry 
	groups, $A+B$ is a union of cosets of $K$ and, therefore, disjoint 
	to $C+K$. So Kneser's theorem yields
		\begin{equation}\label{eq:1751}
		p^{\ell}\ge |A+B|+|C+K|\ge |A+K|+|B+K|+|C+K|-|K|\,.
	\end{equation}
		Furthermore, $K$ is a linear subspace of $\FF_p^\ell$ and it 
	cannot be equal to the whole space, because then $A+B=\FF_p^\ell$ 
	entailed that~$C$ had to be empty. This proves $|K|\le p^{\ell-1}$ 
	and the first assertion follows. 
	
	Now suppose, moreover, that $|A|+|B|+|C|>(p^2+1)p^{\ell-2}$.
	Due to~\eqref{eq:1751} this implies $|K|>p^{\ell-2}$ and, 
	therefore,~$K$ has the dimension $\ell-1$. So there is a 
	one-dimensional subspace $L\le \FF_p^\ell$ such that 
	$\FF_p^\ell=K\oplus L$. 
	Let $A_\star, B_\star, C_\star\subseteq L$ be the sets determined 
	by $X+K=X_\star+K$ for all $X\in \{A, B, C\}$. These sets clearly 
	satisfy~\ref{it:42i}, and~\eqref{eq:1751} provides the upper bound
	$|A_\star|+|B_\star|+|C_\star|\le p+1$. In view of 
		\[
		p^\ell<|A|+|B|+|C|\le (|A_\star|+|B_\star|+|C_\star|)p^{\ell-1}
	\]
		this actually holds with equality, which proves~\ref{it:42ii}. 
	
	Finally, since $A+B=A_\star+B_\star+K$ is disjoint to $C+K$, 
	we have $(A_\star+B_\star)\cap C_\star=\vn$, whence 
	$(C_\star-A_\star)\cap B_\star=\vn$.
	On the other, the Cauchy-Davenport theorem and~\ref{it:42ii} 
	entail 
	$|C_\star-A_\star|+|B_\star|\ge p$, and~\ref{it:42iii} follows.   
\end{proof}  

The last result of this section will only be used for the special 
prime $p=11$. 

\begin{lemma}\label{lem:ABCD}
	Suppose that $p$ is an odd prime and $\ell\ge 1$. 
	If $A$, $B$, $C$ are three nonempty subsets of $\FF_p^\ell$ 
	such that
	\begin{enumerate}[label=\alabel]
		\item\label{it:43a} $(A+B)\cap C=\vn$,
		\item\label{it:43b} $|A|+|B|+|C|>(p^2+1)p^{\ell-2}$,
		\item\label{it:43c} $|B|+p^{\ell-1}>|A-C|$,
	\end{enumerate}  
	then $B-B=\FF_p^\ell$ and $|A|+|C|\le \frac12(p+1)p^{\ell-1}$.
\end{lemma}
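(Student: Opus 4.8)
The plan is to feed hypotheses~\ref{it:43a} and~\ref{it:43b} into the structural part of Lemma~\ref{lem:42}, whose hypotheses they are verbatim. This yields a direct sum decomposition $\FF_p^\ell=K\oplus L$ with $\dim L=1$ (so $|K|=p^{\ell-1}$) and sets $A_\star,B_\star,C_\star\subseteq L$ with $X\subseteq K+X_\star$ for every $X\in\{A,B,C\}$, together with $|A_\star|+|B_\star|+|C_\star|=p+1$ and $L=(C_\star-A_\star)\dcup B_\star$. All three starred sets are nonempty. The whole lemma then reduces to the single inequality $|B_\star|\ge\frac{p+1}{2}$: it gives $|A|+|C|\le(|A_\star|+|C_\star|)p^{\ell-1}=(p+1-|B_\star|)p^{\ell-1}\le\frac12(p+1)p^{\ell-1}$, which is the second conclusion, and it will also be the engine behind $B-B=\FF_p^\ell$.

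To prove $|B_\star|\ge\frac{p+1}{2}$, suppose for contradiction that $|B_\star|\le\frac{p-1}{2}$, so that $|B|\le|B_\star|\,p^{\ell-1}\le\frac{p-1}{2}p^{\ell-1}$. Then~\ref{it:43b} forces $|A|+|C|>(p^2+1)p^{\ell-2}-\frac{p-1}{2}p^{\ell-1}=\frac{p^2+p+2}{2}p^{\ell-2}$, while~\ref{it:43c} gives $|A-C|<|B|+p^{\ell-1}\le\frac{p+1}{2}p^{\ell-1}$; so it suffices to prove the reverse estimate $|A-C|\ge\frac{p+1}{2}p^{\ell-1}$. Set $H=\Sym(A-C)$, a subspace of $\FF_p^\ell$, and split into cases. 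If $H=\FF_p^\ell$, then $A-C=\FF_p^\ell$ and we are done. If $\dim H\le\ell-2$, then $|H|\le p^{\ell-2}$ and Kneser's theorem gives $|A-C|\ge|A|+|C|-|H|>\frac{p^2+p+2}{2}p^{\ell-2}-p^{\ell-2}=\frac12(p+1)p^{\ell-1}$. If $\dim H=\ell-1$, then $A-C$ is a union of cosets of $H$; passing to $\FF_p^\ell/H\cong\FF_p$ and using Cauchy-Davenport, the number of these cosets is at least $\min\{p,\alpha+\gamma-1\}$, where $\alpha$ and $\gamma$ are the numbers of $H$-cosets meeting $A$ and $C$ respectively. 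Since $\alpha+\gamma\ge(|A|+|C|)/p^{\ell-1}>\frac{p+1}{2}+\frac1p$ and $\frac{p+1}{2}$ is an integer (here $p$ odd is used), we get $\alpha+\gamma\ge\frac{p+3}{2}$, hence again $|A-C|\ge\frac{p+1}{2}p^{\ell-1}$. Every case contradicts the bound coming from~\ref{it:43c}, so $|B_\star|\ge\frac{p+1}{2}$.

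It remains to deduce $B-B=\FF_p^\ell$. Combining $|A|+|C|\le(p+1-|B_\star|)p^{\ell-1}$ with~\ref{it:43b} gives $|B|>|B_\star|\,p^{\ell-1}-(p-1)p^{\ell-2}$; thus $B$ omits fewer than $(p-1)p^{\ell-2}$ points from the set $K+B_\star\supseteq B$ of size $|B_\star|\,p^{\ell-1}$, and since $(p-1)p^{\ell-2}<p^{\ell-1}$ it must meet each of the $|B_\star|$ cosets of $K$ inside $K+B_\star$. Two observations follow. First, because $|B_\star|\ge\frac{p+1}{2}\ge2$, the coset of $K$ that $B$ meets most densely omits fewer than $\frac12(p-1)p^{\ell-2}<\frac12|K|$ of its points, so Lemma~\ref{Bdumm} applied inside $K$ shows that its difference set is all of $K$. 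Second, for any two distinct cosets $K+b_1^\star$ and $K+b_2^\star$ with $b_1^\star,b_2^\star\in B_\star$ we have $|B\cap(K+b_1^\star)|+|B\cap(K+b_2^\star)|>2p^{\ell-1}-(p-1)p^{\ell-2}>|K|$, whence $\bigl(B\cap(K+b_1^\star)\bigr)-\bigl(B\cap(K+b_2^\star)\bigr)=(b_1^\star-b_2^\star)+K$ by Lemma~\ref{Bdumm} again. Taking the union over all such pairs, $B-B\supseteq(B_\star-B_\star)+K$; and $|B_\star|\ge\frac{p+1}{2}$ together with Cauchy-Davenport forces $B_\star-B_\star=L$, so $B-B\supseteq L+K=\FF_p^\ell$.

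I expect the main obstacle to be the lower bound $|A-C|\ge\frac{p+1}{2}p^{\ell-1}$, and within it the case $\dim\Sym(A-C)=\ell-1$: a plain application of Kneser's theorem there loses a full codimension-one subgroup, so one must instead descend to the rank-one quotient, count cosets, and exploit the parity of $p$ to round $\frac{p+1}{2}+\frac1p$ up to $\frac{p+3}{2}$. The remaining two cases of that estimate, and the final passage to $B-B=\FF_p^\ell$, are comparatively routine once the key reduction to $|B_\star|\ge\frac{p+1}{2}$ is in place.
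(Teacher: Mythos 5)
Your proposal is correct. The overall skeleton (invoke Lemma~\ref{lem:42}, reduce everything to $|B_\star|\ge\tfrac{p+1}{2}$, then get $B-B=\FF_p^\ell$ from a fibre-wise large-intersection argument) matches the paper, but your treatment of the key estimate is genuinely different. The paper uses hypothesis~\ref{it:43b} together with conclusion~\ref{it:42ii} of Lemma~\ref{lem:42} to bound the total fibre-wise deficiency
\[
\sum_{i\in A_\star}\bigl(p^{\ell-1}-|A_i|\bigr)+\sum_{i\in B_\star}\bigl(p^{\ell-1}-|B_i|\bigr)+\sum_{i\in C_\star}\bigl(p^{\ell-1}-|C_i|\bigr)<p^{\ell-1}
\]
by a number smaller than $p^{\ell-1}$, concludes $|A_i|+|C_j|>p^{\ell-1}$ for all $i\in A_\star$, $j\in C_\star$, hence the exact identity $A-C=(A_\star-C_\star)\times\FF_p^{\ell-1}$, and then~\ref{it:43c} and~\ref{it:42iii} give $|B_\star|\ge|A_\star-C_\star|=p-|B_\star|$ directly, with no need to touch $\Sym(A-C)$. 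You instead argue by contradiction, assume $|B_\star|\le\tfrac{p-1}{2}$, and bound $|A-C|$ from below via a fresh Kneser/Cauchy--Davenport analysis on $H=\Sym(A-C)$, splitting into the three cases $\dim H=\ell$, $\dim H\le\ell-2$, $\dim H=\ell-1$. This works, and the parity step $\alpha+\gamma\ge\tfrac{p+3}{2}$ in the codimension-one case is correctly justified by $p$ being odd. The trade-off: your route avoids having to prove the nice equality $A-C=(A_\star-C_\star)\times\FF_p^{\ell-1}$ but pays for it with the extra case analysis on $\Sym(A-C)$; the paper's fibre argument yields that equality in one stroke and then extracts $|B_\star|\ge\tfrac12(p+1)$ by a one-line arithmetic comparison, which is cleaner. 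Your treatment of $B-B=\FF_p^\ell$ at the end is essentially the same as the paper's, just phrased with $B\cap(K+b_\star)$ in place of the paper's fibres $B_i$.
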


\begin{proof}
As this statement is invariant under automorphisms of $\FF_p^\ell$, 
Lemma~\ref{lem:42} allows us to assume that there exist 
sets $A_\star, B_\star, C_\star\subseteq \FF_p$ such that 
\begin{enumerate}[label=\rmlabel]
		\item\label{it:43i} $X\subseteq X_\star\times \FF^{\ell-1}$ 
				for all $X\in \{A, B, C\}$, 
		\item\label{it:43ii} $|A_\star|+|B_\star|+|C_\star|=p+1$,
		\item\label{it:43iii} and 
			$\FF_p=(C_\star-A_\star)\dcup B_\star$.
	\end{enumerate}  

	Let us express each of the three sets $X\in \{A, B, C\}$ in 
	the form  
		\[
		X=\bigdcup_{i\in X_\star} \{i\}\times X_i
	\]
		with appropriate subsets $X_i\subseteq \FF_p^{\ell-1}$.
	Equation~\ref{it:43ii} and Inequality~\ref{it:43b} entail 
		\begin{align*}
    &\sum_{i\in A_\star}\bigl(p^{\ell-1}-|A_i|\bigr)
    +
    \sum_{i\in B_\star}\bigl(p^{\ell-1}-|B_i|\bigr)
    +
    \sum_{i\in C_\star}\bigl(p^{\ell-1}-|C_i|\bigr) \\
    =\,&
    \bigl(|A_\star|+|B_\star|+|C_\star|\bigr)p^{\ell-1}
    -\bigl(|A|+|B|+|C|\bigr) \\
    <\,&
    (p+1)p^{\ell-1}-(p^2+1)p^{\ell-2}
    <
    p^{\ell-1}\,.
	\end{align*}
		In particular, for all $i\in A_\star$ and $j\in C_\star$
	we have 
		\[
		 \bigl(p^{\ell-1}-|A_i|\bigr)+\bigl(p^{\ell-1}-|C_j|\bigr)
		 <
		 p^{\ell-1}\,,
	\]
		i.e., $|A_i|+|C_j|>p^{\ell-1}$. In view of Lemma~\ref{Bdumm}
	this yields $A_i-C_j=\FF_p^{\ell-1}$, which in turn leads to 
		\begin{equation}\label{eq:A-C}
		A-C=(A_\star-C_\star)\times \FF_p^{\ell-1}\,.
	\end{equation}
		
	Similarly, it can be shown that 
		\begin{equation}\label{eq:B-B}
		\text{ if } |B_\star|\ge 2\,,
		\text{ then } B-B=(B_\star-B_\star)\times \FF_p^{\ell-1}\,.
	\end{equation}
		Indeed, if $d\in B_\star-B_\star$ is nonzero, then there are 
	distinct $i, j\in B_\star$ such that $d=i-j$, we can prove 
	$B_i-B_j=\FF_p^{\ell-1}$ as before, 
	and $\{d\}\times \FF_p^{\ell-1}\subseteq B-B$ follows. Moreover, 
	provided that $|B_\star|\ge 2$ there needs to exist 
	some $i\in B_\star$ with $|B_i|>\frac12 p^{\ell-1}$ 
	and $B_i-B_i=\FF_p^{\ell-1}$ shows that 
	$\{0\}\times \FF_p^{\ell-1}\subseteq B-B$ holds as well. 
	
	Now~\eqref{eq:A-C} and~\ref{it:43c} 
	yield $|B_\star|\ge |A_\star-C_\star|$ 
	and because of~\ref{it:43iii} we 
	obtain $|B_\star|\ge \frac12(p+1)\ge 2$. 
	So Lemma~\ref{Bdumm} reveals $B_\star-B_\star=\FF_p$
	and~\eqref{eq:B-B} entails $B-B=\FF_p^\ell$. 
	Moreover, we have
		\[
		\frac{|A|+|C|}{p^{\ell-1}}
		\le 
		|A_\star|+|C_\star|
		=
		p+1-|B_\star|
		\le 
		\tfrac12(p+1)\,. \qedhere
	\]
	\end{proof}

\section{First steps} \label{sec:3}

For the sake of completeness we would like to recall the 
classification of $\SFRR_0(\FF_p^n)$ when $p\equiv 2\pmod{3}$
(cf.\ e.g., \cite{Gr05}*{Lemma~5.6}).

\begin{lemma}\label{lem:5}
    Let $p$ denote a prime number with $p\equiv 2\pmod{3}$ 
    and $n\ge 1$. If $A\subseteq \FF_p^n$ with 
    $|A|\ge\frac13(p+1)p^{n-1}$ is sum-free, then $A$ is 
    isomorphic to the `cuboid' 
    $Q=\bigl[\frac{p+1}3, \frac{2p-1}3\bigr]\times \FF_p^{n-1}$. 
    
    In particular, we have $\sfr_0(\FF_p^n)=\frac13(p+1)p^{n-1}$
    and $\SFRR_0(\FF_p^n)$ is the collection of all sets isomorphic 
    to~$Q$.
\end{lemma}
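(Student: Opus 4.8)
The plan is to feed the sum-freeness of $A$ into Lemma~\ref{lem:42} and then read the structure off its conclusion, since that lemma already packages the relevant Kneser-theoretic input. As $A$ is sum-free we have $(A+A)\cap A=\vn$, so Lemma~\ref{lem:42} applied to the triple $(A,A,A)$ gives $3|A|\le(p+1)p^{n-1}$; together with the hypothesis $|A|\ge\tfrac13(p+1)p^{n-1}$ this forces $|A|=\tfrac13(p+1)p^{n-1}$, and hence $3|A|=(p+1)p^{n-1}>(p^2+1)p^{n-2}$. Therefore the second half of Lemma~\ref{lem:42} applies: there are a decomposition $\FF_p^n=K\oplus L$ with $\dim L=1$ and a set $S\subseteq L$ — necessarily the common value of $A_\star$, $B_\star$, $C_\star$, because the three input sets coincide — such that $A\subseteq K+S$, $|S|=\tfrac13(p+1)$, and $L=(S-S)\dcup S$. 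Since $|K+S|=p^{n-1}|S|=|A|$, the inclusion is an equality: $A=K+S$.

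It then remains to identify $S$ as an interval in the line $L\cong\FF_p$. From $L=(S-S)\dcup S$ we read off $|S-S|=p-|S|=\tfrac13(2p-1)=2|S|-1$. Assuming for the moment $p\ge 5$, we have $|S|\ge 2$ and $|S-S|=\tfrac13(2p-1)\le p-2$, so Theorem~\ref{thm:Vosper} applied to $S$ and $-S$ shows that $S$ is an arithmetic progression. (For $p=2$ one has $|S|=1$, and since $0\notin A\supseteq S$ the set $S$ is the nonzero element of $L$, so the claim is immediate; I would dispose of this case at the outset.) Rescaling the $L$-coordinate by the inverse of the common difference — a legitimate automorphism of $\FF_p^n$ — we may assume $S=\{c,c+1,\dots,c+\ell-1\}$ with $\ell=\tfrac13(p+1)$.

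Finally I would use sum-freeness once more to fix the translate $c$. Since $A=K+S$ is sum-free, $S$ itself is sum-free, so the intervals $S$ and $S+S=\{2c,\dots,2c+2\ell-2\}$ are disjoint; as $|S|+|S+S|=3\ell-1=p$ they must partition $\FF_p$, that is $S+S=\FF_p\setminus S$. Both sides are cyclic intervals of length $2\ell-1<p$, so comparing their initial points yields $2c=c+\ell$, i.e.\ $c=\ell$ and $S=\bigl[\tfrac{p+1}3,\tfrac{2p-1}3\bigr]$. Hence $A=K+S$ is isomorphic to $\bigl[\tfrac{p+1}3,\tfrac{2p-1}3\bigr]\times\FF_p^{n-1}=Q$. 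For the final assertion it then suffices to note that $Q$ is sum-free (its sumset reduces mod $p$ to the complementary interval) and has size $\tfrac13(p+1)p^{n-1}$, so that this value is simultaneously a lower bound and, by what precedes, an upper bound for $\sfr_0(\FF_p^n)$, and the sum-free sets of that size are exactly those isomorphic to $Q$.

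I expect no serious obstacle here: essentially all of the additive-combinatorial weight — the passage via Kneser's theorem to a one-dimensional quotient — has already been carried out in Lemma~\ref{lem:42}. The only genuine steps left are the Vosper application and a short but slightly fiddly interval computation; the single point that demands care is that automorphisms of $\FF_p^n$ are linear, not affine, so the location of the interval is not free and must be pinned down from sum-freeness rather than removed by a translation. The prime $p=2$ is a mild degenerate case that needs a separate line.
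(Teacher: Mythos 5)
Your proof is correct and rests on the same two ingredients as the paper's---Lemma~\ref{lem:42} and Vosper's theorem---but arranges them in the opposite order. The paper proves the one-dimensional case first by applying Vosper to the pair $(A,A)$ via the direct sum-freeness bound $|A+A|\le p-|A|$, and then fixes the translate from $0\notin A$ together with $2a,\,2\bigl(a+\tfrac{p-2}{3}\bigr)\notin A$; for $n\ge 2$ it then invokes Lemma~\ref{lem:42} to pass to a product $A_\star\times\FF_p^{n-1}$ and appeals to the $n=1$ case. You instead apply Lemma~\ref{lem:42} once, uniformly in $n$, read off $|S-S|=p-|S|$ from its conclusion $L=(S-S)\dcup S$, and run Vosper on $(S,-S)$ along the line $L$; your comparison of the cyclic intervals $S+S$ and $\FF_p\setminus S$ via their initial points is a tidy way of carrying out the same endpoint computation. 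The uniform treatment removes the $n=1$ case split at essentially no cost, and you are right to flag that $p=2$ needs a separate line since Vosper requires $|S|\ge 2$---a degenerate case the paper's write-up leaves implicit.
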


\begin{proof}
	Suppose first that $n=1$. As the set $A$ is sum-free, 
	it is disjoint to $A+A$, whence    
      \[
      |A+A|\leq p-|A|
      \le
      \tfrac13(2p-1)
      \le
      2|A|-1\,.
   \]
      So, by Vosper's theorem, $A$ is an arithmetic progression
   of length $\frac{p+1}3$.
   We may assume that its common difference is $1$. 
   Together with $0\not\in A$ this yields 
   $A=\bigl[a, a+\frac{p-2}3\bigr]$ for 
   some $a\in \bigl[\frac{2p-1}3\bigr]$, and in view 
   of $2a, 2\bigl(a+\frac{p-2}3\bigr)\not\in A$ only the 
   case $a=\frac{p+1}3$ is possible. 
   
   This proves the lemma for $n=1$ and we can proceed 
   to the general case. By Lemma~\ref{lem:42} applied to $A=B=C$
   we may assume that $A$ is of the form $A_\star\times \FF_p^{n-1}$
   for some $A_\star\subseteq \FF_p$. Clearly, $A_\star$ has to 
   be a sum-free set of cardinality $\frac{p+1}3$. Thus $A_\star$ 
   is isomorphic to the interval 
   $\bigl[\frac{p+1}3, \frac{2p-1}3\bigr]$ and the result follows. 
\end{proof}

Theorem~\ref{thm:SF1} alleges that structured sets cannot be 
covered by any sets isomorphic to the cuboid $Q$. Let us briefly 
pause to check that this is indeed the case. 

\begin{lemma}
	Let $p=6m-1\ge 11$ be a prime number and $n\ge 1$. 
	If $A\subseteq \FF_p^n$ is structured, then there is 
	no $B\in \SFRR_0(\FF_p^n)$ covering $A$.
\end{lemma}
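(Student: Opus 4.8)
The plan is to assume for contradiction that some structured set $A$ is covered by a member $B$ of $\SFRR_0(\FF_p^n)$ and then to reach a numerical contradiction by combining the structure of both sets. Since being structured and being covered by an element of $\SFRR_0(\FF_p^n)$ are invariant under automorphisms of $\FF_p^n$, and $\SFRR_0(\FF_p^n)$ is closed under automorphisms (Lemma~\ref{lem:5}), we may assume that $A=B_0\times\FF_p^{n-\ell}$ is the explicit set of Definition~\ref{dfn:0117} for some $\ell\in[n]$ and some $P\subseteq\FF_p^{\ell-1}$ with $0\not\in P+P$. By Lemma~\ref{lem:5}, $B$ is isomorphic to the cuboid $Q$ of that lemma, so $H:=\Sym(B)$ is a hyperplane and $B$ is a union of $2m$ cosets of $H$. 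Pick a linear functional $\varphi\colon\FF_p^n\to\FF_p$ with $\ker\varphi=H$; then $B=\varphi^{-1}(S)$ for $S:=\varphi(B)$, the image has size $|S|=2m$ (the $2m$ cosets are distinct fibres of $\varphi$), and $S$ is sum-free, because a solution $s_1+s_2=s_3$ in $S$ lifts to $b_1,b_2\in B$ with $b_1+b_2\in\varphi^{-1}(s_3)\subseteq B$. From $A\subseteq B$ we get $\varphi(A)\subseteq S$; in particular $\varphi(A)$ is a sum-free subset of $\FF_p$ of size at most $2m<p$.

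Now I would run a short case analysis on $\varphi$, splitting the coordinates of $\FF_p^n$ into the blocks $x_1$, $(x_2,\dots,x_\ell)$, $(x_{\ell+1},\dots,x_n)$ and writing $\varphi(x)=ax_1+\rho(x_2,\dots,x_\ell)+\sigma(x_{\ell+1},\dots,x_n)$. If $\sigma\ne 0$, then $\varphi$ maps the coset $\{b\}\times\FF_p^{n-\ell}\subseteq A$ (any $b\in B_0$) onto $\FF_p$, so $\varphi(A)=\FF_p$, contradicting $|\varphi(A)|<p$. If $\sigma=0$ but $\rho\ne 0$, then since $p\ge 11$ the interval $[2m+1,4m-3]$ is nonempty, so $B_0$ contains a slice $\{x_1\}\times\FF_p^{\ell-1}$, whence $A$ contains $\{x_1\}\times\FF_p^{n-1}$ and again $\varphi(A)=\FF_p$, a contradiction. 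So $\varphi(x)=ax_1$ with $a\ne 0$, and thus $\varphi(A)=a\cdot\pi_1(B_0)$, where $\pi_1$ is the projection to the first coordinate. Inspecting $B_0$ gives $[2m-1,4m-3]\subseteq\pi_1(B_0)\subseteq[2m-1,4m-1]$, and moreover $4m-2\in\pi_1(B_0)$ when $\ell\ge 2$. So if $\ell\ge 2$, then $\pi_1(B_0)$ contains both $2m-1$ and $4m-2=(2m-1)+(2m-1)$, hence is not sum-free; then neither is $\varphi(A)=a\cdot\pi_1(B_0)$, a contradiction.

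This leaves $\ell=1$, where the actual content sits. Here $P\subseteq\FF_p^{0}$ forces $P=\emptyset$ and $B_0=I$ with $I:=[2m-1,4m-3]$, so that $\varphi(A)=aI$, and we must rule out $aI\subseteq S$. Applying the automorphism $x\mapsto a^{-1}x$ reduces this to: $I$ is contained in no sum-free $T\subseteq\FF_p$ with $|T|=2m$. As $|T\setminus I|=1$, it suffices to prove that $I\cup\{z\}$ is not sum-free for any $z\in\FF_p\setminus I$, and this follows from the identity
\[
	I\cup(I-I)\cup(I+I)=\FF_p,
\]
which is immediate from $I-I\supseteq[0,2m-2]$, $I=[2m-1,4m-3]$, $I+I\supseteq[4m-2,6m-2]$ and $p=6m-1$. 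Indeed, any $z\notin I$ then lies in $I-I$ or in $I+I$, and writing $z=a-b$ (so $z+b=a$) or $z=a+b$ with $a,b\in I$ exhibits a solution of $x+y=w$ inside $I\cup\{z\}$.

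The one delicate point is precisely this case $\ell=1$: when $\ell\ge 2$ the projection $\pi_1(B_0)$ fails to be sum-free and we conclude at once, but for $\ell=1$ we have $\pi_1(B_0)=I$, which \emph{is} sum-free, and one genuinely has to establish the maximality-type fact that $I$ admits no sum-free enlargement, i.e.\ the covering identity above. The remaining ingredients — the legitimacy of the automorphism reductions, that $\Sym(Q)$ is a hyperplane and $B$ a union of $2m$ of its cosets, and the verification of the three interval inclusions for all $m\ge 2$ — are routine.
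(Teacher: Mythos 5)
The proposal is correct, but it resolves the crucial final step by a different argument than the paper. Both proofs begin with the same reduction: using that a structured set contains full lines in enough coordinate directions to force the relevant linear form to depend only on the first coordinate (the paper observes directly that for each $i\ge 2$ the set $A$ contains a line in direction $e_i$, whereas you reach the same conclusion via a case analysis on the block decomposition of $\varphi$). After the reduction the paper exploits that the target interval $[2m,4m-1]$ is symmetric about the origin: from $\mu\cdot[2m-1,4m-3]\subseteq[2m,4m-1]$ it deduces $\mu\cdot[2m+2,4m]\subseteq[2m,4m-1]$ as well, and (since $m\ge 2$ glues the two intervals) $\mu\cdot[2m-1,4m]\subseteq[2m,4m-1]$, which is impossible because the left side has $2m+2$ elements versus $2m$ on the right. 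Your argument instead splits on $\ell$: for $\ell\ge 2$ you note that $\pi_1(B_0)$ contains both $2m-1$ and $4m-2=(2m-1)+(2m-1)$ and hence cannot inject into a sum-free set, while for $\ell=1$ you prove the separate saturation identity $I\cup(I-I)\cup(I+I)=\FF_p$ for $I=[2m-1,4m-3]$ to rule out a one-element sum-free extension. Both approaches are sound; the paper's symmetry observation yields a single uniform cardinality contradiction and avoids the case split as well as the extra maximality fact you establish for $\ell=1$.
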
 

\begin{proof}
	By Definition~\ref{dfn:0117} we can suppose that there are a
	dimension $\ell\in [n]$ and a (possibly empty) 
	set $P\subseteq \FF_p^{\ell-1}$ with $0\not\in P+P$
	such that $A=A'\times \FF_p^{n-\ell}$ holds for the 
	set
		\begin{align*}
		A'=\{(2m-1, 0)\}
		\cup 
		\{2m\} &\times (\FF_p^{\ell-1}\sm P)
		\cup 
		[2m+1, 4m-3]\times\FF_p^{\ell-1} \\
		&\cup 
		\{4m-2\}\times (\FF_p^{\ell-1}\sm\{0\})
		\cup \{4m-1\}\times P\,.
	\end{align*}
		
	If there existed some $B\in \SFRR_0(\FF_p^n)$ covering $A$,
	then Lemma~\ref{lem:5} would yield a nonzero linear form 
	$\lambda\colon\FF_p^n\lra\FF_p$ such that 
		\begin{equation}\label{eq:lambda}
		\lambda[A]\subseteq [2m, 4m-1]\,.
	\end{equation}
		
	Let $\{e_1, \dots, e_n\}$ be the standard basis of $\FF_p^n$.
	For every $i\in [2, n]$ there is a line in direction~$e_i$
	completely contained in $A$ and, therefore,~\eqref{eq:lambda}
	implies $\lambda(e_i)=0$. Thus $\mu=\lambda(e_1)$ is nonzero 
	and~\eqref{eq:lambda} leads to 
	$\mu\cdot [2m-1, 4m-3]\subseteq [2m, 4m-1]$. As the right side 
	is symmetric about the origin, 
	$\mu\cdot [2m+2, 4m]\subseteq [2m, 4m-1]$ holds as well. 
	Due to $m\ge 2$ both inclusions combine to
	$\mu\cdot [2m-1, 4m]\subseteq [2m, 4m-1]$. 
	But here the left side has more elements than the right side, 
	which is absurd.   
\end{proof}

It should be clear that in view of these two lemmata 
Theorem~\ref{thm:2} implies Theorem~\ref{thm:SF1}. 
Thus it suffices to establish Theorem~\ref{thm:2} 
in the remainder of this article. 
We commence with the one-dimensional case.
   
\begin{lemma}\label{lem:6}
    Let $p=6m-1$ be a prime number, where $m\ge 2$. 
    If $A\subseteq\FF_p$ is a sum-free set of size $|A|=2m-1$, 
    then $A$ is isomorphic either to a subset of $[2m, 4m-1]$ or 
    to $[2m-1, 4m-3]$.
\end{lemma}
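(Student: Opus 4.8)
The plan is to analyse the set $A+A$ using the Cauchy--Davenport and Vosper theorems, mirroring the strategy already used for $n=1$ in Lemma~\ref{lem:5}. Since $A$ is sum-free, $A$ is disjoint from $A+A$, so $|A+A|\le p-|A|=(6m-1)-(2m-1)=4m$. On the other hand Cauchy--Davenport gives $|A+A|\ge 2|A|-1=4m-3$. So $|A+A|$ takes one of the four values $4m-3,\,4m-2,\,4m-1,\,4m$. First I would dispose of the borderline possibility $|A+A|\ge p-1=6m-2$: this cannot occur because $6m-2>4m$ for $m\ge 2$, so in particular $|A+A|\le p-2$ and Vosper's theorem is applicable whenever $|A+A|\le 2|A|-1$.

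The case $|A+A|=4m-3=2|A|-1$ is the Vosper case: then $A$ is an arithmetic progression, and after applying an automorphism of $\FF_p$ we may assume its common difference is $1$, so $A=[a,a+2m-2]$ for some $a$. Sum-freeness then forces $2a\notin A$ and $2(a+2m-2)\notin A$; running the same elementary interval arithmetic as in the proof of Lemma~\ref{lem:5} (the forbidden sums $2a$ and $2a+4m-4$ together with $0\notin A$ pin down $a$) should leave exactly $a=2m-1$, i.e. $A=[2m-1,4m-3]$, which is the second alternative in the statement. The remaining three cases have $|A+A|\in\{4m-2,4m-1,4m\}$, i.e. $|A+A|\ge 2|A|$, and here I want to conclude that $A$ is isomorphic to a subset of an interval of length $2m$, namely a translate of $[2m,4m-1]$.

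For those three cases the idea is: $A$ is sum-free of size $2m-1$, so $A$ is contained in a sum-free set of size $2m-1$ that one can try to enlarge. Concretely, since $\SFRR_0(\FF_p)$ consists (by Lemma~\ref{lem:5} with $n=1$, or Yap's observation) of the single arithmetic progression $Q_0=[2m,4m-1]$ of length $2m$ up to automorphism, it suffices to show $A$ is isomorphic to a subset of $Q_0$. I would argue as follows. Consider the complement or, more efficiently, use that a sum-free set of size $2m-1$ missing the "interval" structure must still be highly structured because $p-|A+A|\ge|A|$. Alternatively, and more robustly, I would show $A\cup A$ extends: pick the largest sum-free superset $A\subseteq A^+\subseteq\FF_p$; it is maximal sum-free, and one classifies maximal sum-free subsets of $\FF_p$ directly — for $p=6m-1$ every maximal sum-free subset of $\FF_p$ is, up to automorphism, either the interval $[2m-1,4m-3]$ or a subset of $[2m,4m-1]$ of size $2m$ (this is a short finite check using Vosper/Kneser on $\FF_p$). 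Then $A\subseteq A^+$ gives the two claimed alternatives immediately.

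The main obstacle I anticipate is the last step: cleanly ruling out that a maximal sum-free set other than the translates of $[2m,4m-1]$ and $[2m-1,4m-3]$ could have size $\ge 2m-1$. The cheap way is to avoid classifying all maximal sum-free sets and instead argue case-by-case on $|A+A|\in\{4m-2,4m-1,4m\}$ via Kneser's theorem (Theorem~\ref{Kneser}) applied to $A+A$ in $\FF_p$: since $p$ is prime, $\Sym(A+A)$ is trivial unless $A+A=\FF_p$, which is excluded, so Kneser degenerates to Cauchy--Davenport and one gets almost no extra information. Hence the real work is a direct combinatorial argument: from $|A+A|\le 4m$ and $|A|=2m-1$ deduce that $A$ lies in a short interval. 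I expect the slickest route is to apply Vosper not to $A+A$ but to $A+A'$ for a suitable two-element or larger subset, or to use the standard "a dense sum-free subset of $\FF_p$ is an interval" lemma (which is essentially Lemma~\ref{lem:5} for $n=1$ applied after noticing $|A|=2m-1=\sfr_0(\FF_p)-1$ is just one below extremal). Concretely: if $|A|=2m-1$ and $A$ is not an AP of difference (after scaling) $1$, one adds a well-chosen point to make it extremal size $2m$ while staying sum-free, contradicting that $\SFRR_0(\FF_p)$ forces an interval — so either $A$ already sits inside such an interval, or $A$ itself is forced to be $[2m-1,4m-3]$. I would write the final version along these lines, keeping the interval arithmetic explicit but brief.
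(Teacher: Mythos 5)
The Vosper case you handle cleanly, and it matches one branch of the paper's argument: when $|A+A|=2|A|-1=4m-3$, Vosper forces $A$ to be an arithmetic progression, and interval arithmetic pins it down to $[2m-1,4m-3]$ up to isomorphism. The problem is with the three remaining cases $|A+A|\in\{4m-2,4m-1,4m\}$, and you yourself flag this as "the main obstacle"; the gap is genuine and your suggested patches do not close it. Extending $A$ to a maximal sum-free set $A^+$ does not help unless you can classify all maximal sum-free subsets of $\FF_p$ of size $\ge 2m-1$, which is essentially the lemma restated and is not "a short finite check" (there is no uniform bound on $p$). And the idea of adding a point to $A$ to reach the extremal size $2m$ breaks down exactly in the hard case: $[2m-1,4m-3]$ itself is a maximal sum-free set of size $2m-1$, so one cannot always extend, and this is precisely the configuration you must identify rather than rule out. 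Applying Kneser to $A+A$, as you note, degenerates to Cauchy--Davenport in $\FF_p$ and gives nothing new.

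The paper's key move, which is absent from your proposal, is to symmetrize: set $X=A\cup(-A)$ and split on $|X|$. Since $A$ is sum-free, $A$ is disjoint from $A+X$ (not merely from $A+A$), so $|A+X|\le p-|A|=4m$. If $|X|\ge 2m+2$ this forces $|A+X|\le|A|+|X|-1$, and Vosper applied to the pair $(A,X)$ — where $X$ is considerably larger than $A$ — makes $A$ an arithmetic progression, giving the $[2m-1,4m-3]$ alternative. If instead $|X|\le 2m+1$, then because $0\notin A$ the size $|X|$ is even, hence $|X|=2m=\sfr_0(\FF_p)$; if $X$ is sum-free one is done by the one-dimensional case of Lemma~\ref{lem:5}, and if $X$ is not sum-free a short argument with a solution $x_1+x_2=x_3$ in $X$ forces $3x_1=0$, contradicting $0\notin A$ and $p\ne3$. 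This symmetrization is what lets Vosper bite outside the single extremal case $|A+A|=2|A|-1$; your version applies Vosper only to $A+A$ and therefore cannot see the cases you are missing.
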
   

\begin{proof}	We distinguish two cases according to the cardinality 
	of $X=A\cup(-A)$.
 
   \smallskip

   {\it \hskip2em First Case: $|X|\geq 2m+2$}
        
   \smallskip

    Since the set $A$ is sum-free, it is disjoint to $A+X$ and 
    we have     
        \[
       |A+X|\le p-|A|=4m\le |A|+|X|-1\,.
    \]
	     So, by Vosper's theorem, $A$ and $X$ must be arithmetic 
    progressions with the same common difference and one checks 
    easily that $A$ needs to be isomorphic to $[2m-1, 4m-2]$.
    
    \smallskip
        
    {\it \hskip2em Second Case: $|X|\leq 2m+1$}
        
    \smallskip

    Due to $0\not\in A$ the number $|X|$ has to be even and the only 
    possibility is $|X|=2m$. It suffices to show that $X$ is 
    isomorphic to $[2m, 4m-1]$. If this failed, then 
    Lemma~\ref{lem:5} would tell us that $X$ is not sum-free. 
    
    Thus there are three elements $x_1, x_2, x_3\in X$ such that 
    $x_1+x_2=x_3$. We can suppose without loss of generality that 
    $x_3\in A$ but $x_1\not\in A$. 
    The latter means that $x_1$ is the unique member of 
    $X\sm A$ and $-x_1\in A$. 
    Now $x_2=(-x_1)+x_3\in A+A$ shows that $x_2$ is in $X\sm A$ as 
    well, whence $x_2=x_1$. Similarly, $-x_3=(-x_1)+(-x_2)\in A+A$ 
    leads to $-x_3=x_1$, so that altogether we obtain $3x_1=0$. But 
    now $0\not\in A$ and $p\ne 3$ yield a contradiction.
\end{proof}

Working towards the higher dimensional generalisation from now on, 
we proceed with a discussion of the case that the set $A$ under 
consideration lives in the `middle third' of $\FF_p^n$. This 
will be the last time in the argument where we have to work 
explicitly with the definition of structured sets. 
 
\begin{lemma}\label{prop:21}
	If $p=6m-1\ge 11$ denotes a prime number and $n\ge 1$, 
	then every set $A\subseteq [2m-1, 4m-1]\times \FF_p^{n-1}$
	belonging to $\SFR_1(\FF_p^n)$ which has at least the size  
	$(2m-1)p^{n-1}$ is structured.
\end{lemma}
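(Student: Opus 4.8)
The plan is to deduce the shape of $A$ from sum-freeness together with the size hypothesis, using the assumption $A\in\SFR_1(\FF_p^n)$ only to rule out the degenerate case in which $A$ sits inside a cuboid. Write $A=\bigdcup_{i=2m-1}^{4m-1}\{i\}\times A_i$ with $A_i\subseteq\FF_p^{n-1}$. A direct check modulo $p=6m-1$ shows that among all pairs $(i,j)\in[2m-1,4m-1]^2$ only $(2m-1,2m-1)$, $(2m-1,2m)$ and $(4m-1,4m-1)$ have a sum again lying in $[2m-1,4m-1]\pmod p$, namely $4m-2$, $4m-1$ and $2m-1$ respectively; hence $A$ is sum-free if and only if
\[
(A_{2m-1}+A_{2m-1})\cap A_{4m-2}=\vn,\quad
(A_{2m-1}+A_{2m})\cap A_{4m-1}=\vn,\quad
(A_{4m-1}+A_{4m-1})\cap A_{2m-1}=\vn.
\]
Since $\tfrac{p+1}3=2m$ and $\tfrac{2p-1}3=4m-1$, the set $[2m,4m-1]\times\FF_p^{n-1}$ belongs to $\SFRR_0(\FF_p^n)$ (cf.\ Lemma~\ref{lem:5}); were $A_{2m-1}$ empty, $A$ would be contained in it, contradicting $A\in\SFR_1(\FF_p^n)$. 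So $A_{2m-1}\ne\vn$.

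Next I would exploit the size hypothesis. Because $|X+Y|\ge\max\{|X|,|Y|\}$ for nonempty sets, the first two conditions give $|A_{2m-1}|+|A_{4m-2}|\le p^{n-1}$ and $|A_{2m}|+|A_{4m-1}|\le p^{n-1}$, while each of the $2m-3$ layers $A_{2m+1},\dots,A_{4m-3}$ has size at most $p^{n-1}$; summing yields $|A|\le(2m-1)p^{n-1}$. By hypothesis all these inequalities are equalities, so $A_{2m+1}=\dots=A_{4m-3}=\FF_p^{n-1}$, one has $|A_{2m-1}+A_{2m-1}|=|A_{2m-1}|$ with $A_{4m-2}=\FF_p^{n-1}\sm(A_{2m-1}+A_{2m-1})$, and $|A_{2m-1}+A_{2m}|=|A_{2m}|$ with $A_{4m-1}=\FF_p^{n-1}\sm(A_{2m-1}+A_{2m})$. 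From $|A_{2m-1}+A_{2m-1}|=|A_{2m-1}|$ it follows that $A_{2m-1}=c+H$ is a coset of a subgroup $H\le\FF_p^{n-1}$; then $A_{2m-1}+A_{2m}=c+(H+A_{2m})$ has size $|A_{2m}|$, which forces $A_{2m}$ to be a union of $H$-cosets, and tracing this through the displayed equalities makes $A_{4m-2}$ and $A_{4m-1}$ unions of $H$-cosets as well. Thus every layer of $A$, hence $A$ itself, is invariant under $\{0\}\times H\le\FF_p^n$.

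Two cases remain. If $|H|>1$, pass to the quotient $\FF_p^n/(\{0\}\times H)\cong\FF_p^{n-\dim H}$: the image $\bar A$ is sum-free (quotienting a sum-free set by a subgroup of symmetries preserves sum-freeness), lies in the corresponding $[2m-1,4m-1]$-slab, has size $(2m-1)p^{(n-\dim H)-1}$, and is contained in no cuboid (such a cuboid would pull back to one containing $A$), so $\bar A\in\SFR_1$; by induction on the dimension $\bar A$ is structured, and therefore so is $A\cong\bar A\times\FF_p^{\dim H}$. If $H=\{0\}$, so that $A_{2m-1}=\{v\}$, then $A_{4m-2}=\FF_p^{n-1}\sm\{2v\}$, $A_{4m-1}=\FF_p^{n-1}\sm(v+A_{2m})$, and the third condition reads $v\notin A_{4m-1}+A_{4m-1}$. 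Applying the automorphism $(x_1,x')\mapsto(x_1,x'-(2m-1)^{-1}x_1v)$ of $\FF_p^n$ (valid since $2m-1\not\equiv0\pmod p$) moves $A_i$ to $iw+A_i$ with $w=-(2m-1)^{-1}v$, so $A_{2m-1}\mapsto\{0\}$; using $2(2m-1)w+2v=0$, $(4m-1)w+v=2mw$ and $2(4m-1)\equiv2m-1\pmod p$, one checks that the image of $A$ is exactly the very structured set with $P:=(4m-1)w+A_{4m-1}$ — these three identities yielding respectively $A_{4m-2}\mapsto\FF_p^{n-1}\sm\{0\}$, $A_{2m}\mapsto\FF_p^{n-1}\sm P$, and $0\notin P+P$. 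Hence $A$ is very structured. (When $n=1$ every $\FF_p^{n-1}$ above is trivial and this last argument applies verbatim; one may instead cite Lemma~\ref{lem:6}.)

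The parts that will require care rather than new ideas are: the equality analysis in the second paragraph — passing from $|X+X|=|X|$ to ``$X$ is a coset'' and from $|X+Y|=|Y|$ to ``$Y$ is $H$-invariant''; the verification that the quotient in the third paragraph still meets every hypothesis of the lemma; and the arithmetic of pinning the shear in the last paragraph precisely onto the template of Definition~\ref{dfn:0117}. I do not anticipate any obstacle of a more serious kind.
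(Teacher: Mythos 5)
Your proposal is correct and follows essentially the same route as the paper: decompose into layers, force equality in the size estimates, deduce that $A_{2m-1}$ is a coset/subspace and the remaining layers are periodic under it, reduce to the case where $A_{2m-1}$ is a singleton, and finish with the shear and the identity $2(4m-1)\equiv 2m-1\pmod p$. The only organizational difference is that you establish the coset structure first and then handle the $|H|>1$ case by an induction (which requires re-checking that the quotient lies in $\SFR_1$), whereas the paper shears to make $A_{2m-1}$ a subspace and then reduces directly to $A_{2m-1}=\{0\}$ by appealing to the definition of ``structured,'' avoiding any re-verification of hypotheses.
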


\begin{proof}Let $A_{2m-1}, \dots, A_{4m-1}\subseteq \FF_p^{n-1}$ be the sets satisfying
\[
	A=\bigdcup_{i\in [2m-1, 4m-1]} \{i\}\times A_i\,.
\]
Notice that $A_{2m-1}$ cannot be empty, because 
$A\subseteq [2m, 4m-1]\times \FF_p^{n-1}$ would contradict our 
assumption that $A$ is not contained in a maximal sum-free subset 
of $\FF_p^n$ (cf.\ Lemma~\ref{lem:5}). As~$A$ is sum-free, we know 
$(A_i+A_j)\cap A_{i+j}=\vn$ whenever $i, j, i+j\in [2m-1, 4m-1]$,
whence  
\[
	|A_{2m-1}|+|A_{4m-2}|
	\le
	|A_{2m-1}+A_{2m-1}|+|A_{4m-2}|
	\le 
	p^{n-1}
\]
and, similarly,  
\[
	|A_{2m}|+|A_{4m-1}|
	\le
	|A_{2m}+A_{2m-1}|+|A_{4m-1}|
	\le 
	p^{n-1}\,.
\]
Together with the trivial upper bound $|A_i|\le p^{n-1}$ this 
yields
\begin{align*}
    (2m-1)\cdot p^{n-1}
    &\le
    |A|
    =
    \sum_{i=2m-1}^{4m-1}|A_i|\\      
    &=
    (|A_{2m-1}|+|A_{4m-2}|)+(|A_{2m}|+|A_{4m-1}|)+\sum_{i=2m+1}^{4m-3}|A_i|\\  
    &\le
    (2m-1)\cdot p^{n-1}\,.
\end{align*}           
Now all inequalities we have encountered so far are actually 
equalities, which means that
\begin{align}
    A_{2m+1}&=\dots=A_{4m-3}=\FF_p^{n-1}\,,\notag \\
    |A_{2m-1}|&=|A_{2m-1}+A_{2m-1}|\,,\label{eq:(2)}\\
    A_{4m-2}&=\FF_p^{n-1}\setminus(A_{2m-1}+A_{2m-1})\,,
    		\label{eq:(3)}\\
    |A_{2m}|&=|A_{2m-1}+A_{2m}|\,,\notag\\
    A_{4m-1}&=\FF_p^{n-1}\setminus(A_{2m-1}+A_{2m})\,.\label{eq:(5)}
\end{align}           

Since for every fixed vector $y\in\FF_p^{n-1}$ the 
map $(i, x)\longmapsto (i, x-iy)$ is an automorphism 
of~$\FF_p\times \FF_p^{n-1}$ that preserves 
$[2m-1, 4m-1]\times \FF_p^{n-1}$, we may assume $0\in A_{2m-1}$. 
Together with~\eqref{eq:(2)} this causes~$A_{2m-1}$ to be a linear 
subspace of~$\FF_p^{n-1}$. The remaining equations inform us that all 
sets~$A_i$ are unions of cosets of~$A_{2m-1}$. According to 
Definition~\ref{dfn:0117} this allows us to assume $A_{2m-1}=\{0\}$ 
and it suffices to show that~$A$ is very structured in this case.             

To this end we observe that~\eqref{eq:(3)} yields 
$A_{4m-2}=\FF_{p}^{n-1}\sm\{0\}$.
Furthermore, due to $2(4m-1)=2m-1$ (in~$\FF_p$) the set $P=A_{4m-1}$ 
satisfies $0\not\in P+P$ and, finally,~\eqref{eq:(5)} yields 
$A_{2m}=\FF_p^{n-1}\sm P$.
Altogether,~$A$ has indeed the form displayed in Definition~\ref{dfn:0117}. 
\end{proof}

\section{Further applications of Kneser's theorem}\label{sec:smp}

It turns out that Kneser's theorem allows us to relax the hypothesis 
of Lemma~\ref{prop:21} on the shape of $A$ considerably. 

\begin{prop}\label{prop:22}
	Let $p=6m-1\ge 11$ be a prime number, $n\ge 2$, 
	and $A\in\SFR_1(\FF_p^n)$.
	If $|A|\ge (2m-1)p^{n-1}$ and there is a set $I\subseteq \FF_p$
	such that $A\subseteq I\times \FF_p^{n-1}$ and $|I|\le p-3$,  
	then $A$ is structured.
\end{prop}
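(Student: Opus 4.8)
The plan is to slice $A$ along the first coordinate and run a weighted Cauchy--Davenport/Vosper analysis that pushes $A$ into the middle-third slab, where Proposition~\ref{prop:21} takes over. Write $A=\bigdcup_{i\in\FF_p}\{i\}\times A_i$ with $A_i\subseteq\FF_p^{n-1}$; after shrinking $I$ we may assume $I=\{i\in\FF_p\colon A_i\ne\vn\}$, so that $|I|\le p-3$. Since $A$ is sum-free, $(A_i+A_j)\cap A_{i+j}=\vn$ whenever $i,j,i+j\in I$, so Lemma~\ref{lem:42} applied inside $\FF_p^{n-1}$ yields the local bound $|A_i|+|A_j|+|A_{i+j}|\le(p+1)p^{n-2}$, while Lemma~\ref{Bdumm} gives the dichotomy that $|A_i|+|A_j|>p^{n-1}$ forces $i+j\notin I$.

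The first substantial step is to analyse the set $J=\{i\in\FF_p\colon |A_i|>\tfrac12 p^{n-1}\}$ of heavy slices. By the dichotomy, $(J+J)\cap I=\vn$; in particular $J$ is sum-free, so $|J|\le\sfr_0(\FF_p)=2m$ by Lemma~\ref{lem:5}. The case $|J|=2m$ is impossible: then $J$ would be isomorphic to $[2m,4m-1]$, a short computation gives $\FF_p\setminus(J+J)=J$, whence $I\subseteq J$ and actually $I=J$, so that $A\subseteq J\times\FF_p^{n-1}$ would be contained in a set isomorphic to the cuboid $Q$ of Lemma~\ref{lem:5}, contradicting $A\in\SFR_1(\FF_p^n)$. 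Hence $|J|\le 2m-1$. Combining this with analogous sum-free-type restrictions relating $J$ to the moderately heavy slices and with the elementary size estimate $|A|\le\bigl(|J|+\tfrac12(|I|-|J|)\bigr)p^{n-1}$ (which forces $|I|\ge\max\{2m-1,\,4m-2-|J|\}$), one obtains two-sided numerical control on $|J|$ and $|I|$.

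The heart of the proof is to upgrade this numerical control to the geometric statement that, after an automorphism of $\FF_p^n$, $A\subseteq[2m-1,4m-1]\times\FF_p^{n-1}$. The point is that the near-maximal sum-free set $J$ has a sumset $J+J$ of near-minimal size, so Vosper's theorem forces $J$ to be (isomorphic to) an interval; the local inequality $|A_i|+|A_j|+|A_{i+j}|\le(p+1)p^{n-2}$ then pins the few light slices $i\in I\setminus J$ to the two ends of that interval, and the size hypothesis forces the whole configuration into the middle-third slab. The prime $p=11$, where there is essentially no numerical slack, is handled by a separate short argument, and this is exactly where Lemma~\ref{lem:ABCD} is used (with $\ell=n-1$, applied to suitable slices). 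Once $A$ sits inside $[2m-1,4m-1]\times\FF_p^{n-1}$, Proposition~\ref{prop:21} shows $A$ is structured.

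I expect the main obstacle to be precisely the passage from the counting bounds to the geometric conclusion. The inequalities above leave a margin of a few coordinates and, crucially, they do not on their own control the very light slices; overcoming this requires exploiting the rigidity of near-extremal sum-free sets in $\FF_p$ (via Vosper's theorem) together with a careful bookkeeping of how the light slices attach to $J$, and it requires running the genuinely separate (though short) argument for $p=11$ using Lemma~\ref{lem:ABCD}.
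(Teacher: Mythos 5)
Your plan shares the paper's guiding idea---slice $A$, identify a sum-free set of heavy slices, classify it, and reduce to Proposition~\ref{prop:21}---but it has a genuine gap at exactly the step you yourself flag as the heart of the matter, and one of your intermediate claims about the argument's shape is mistaken.

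The gap is in establishing that your heavy set $J=\{i\colon |A_i|>\tfrac12 p^{n-1}\}$ has at least $2m-1$ elements. Your only quantitative handle on $J$ is the elementary estimate $|A|\le\tfrac12(|I|+|J|)p^{n-1}$, which yields $|I|+|J|\ge 4m-2$; together with $|J|\le 2m-1$ this bounds $|I|$ from below but does \emph{not} bound $|J|$ from below. Indeed, nothing in your counting rules out $J=\vn$ with $|I|=4m-2$ and every slice of size roughly $\tfrac12 p^{n-1}$: one checks $(p-3)/2\ge 2m-1$ for all $m\ge 1$, so the crude estimate is simply too weak. Consequently, you cannot apply Vosper (you would need $|J+J|\le 2|J|-1$ with $|J|$ large, and your bounds only give $|J+J|\le p-|I|\le p-(4m-2-|J|)$, which is not small enough when $|J|$ is small). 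The paper closes this gap by ordering the slices $B_1,\dots,B_p$ by decreasing size and proving the key inequality ``Claim~1,'' which uses the Cauchy--Davenport theorem on the \emph{union} $C_r\cup(-C_r)$ to extract a sum-free-type triple among the top $r,s,t$ slices whenever $r+s+t\ge p+1$ and $r$ is odd; this drives Claim~2 ($\ell\le 2m+2$, there are at most $2m+2$ nonempty slices) and then Claim~3 ($|B_{2m-1}|>\tfrac12 p^{n-1}$), which is exactly the missing lower bound $|J|\ge 2m-1$. Once Claim~3 is in place, the set of the top $2m-1$ slices is sum-free of size $2m-1$, and Lemma~\ref{lem:6} (not a fresh Vosper argument) does the classification into the two cases.

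Separately, your claim that the prime $p=11$ requires a branch in this proposition using Lemma~\ref{lem:ABCD} is incorrect: the proof of Proposition~\ref{prop:22} is uniform in $p\ge 11$ and never invokes Lemma~\ref{lem:ABCD}. That lemma is used only later, in Proposition~\ref{prop:24}, to handle $p=11$ in the Fourier-analytic endgame of Theorem~\ref{thm:2}.
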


\begin{proof}We enumerate $\FF_p=\{b_1,\dots, b_p\}$ in such a manner that 
the cardinalities of the subsets 
$B_1, \dots, B_p\subseteq \FF_p^{n-1}$ determined by 
\[
	A=\bigdcup_{i\in [p]}\,\{b_i\}\times B_i
\]
are ordered by $|B_1|\ge |B_2|\ge \dots\ge |B_p|$.
It will be convenient to set $C_i=\{b_1,\dots,b_i\}$ for 
every $i\in[p]$ and $\ell=\max\{i\in[p]\colon B_i\ne\vn\}$.
Notice that the numbers $\beta_i=|B_i|/p^{n-2}$ fulfil  
\[
	p\ge \beta_1\ge\beta_2\ge\dots\ge \beta_\ell>0
	\quad \text{ and } \quad 
	\beta_{\ell+1}=\dots=\beta_p=0\,.
\]
Our assumptions $|A|\ge (2m-1)p^{n-1}$ and $|I|\le p-3$ entail 
\begin{equation}\label{eq:0143}
	\sum_{i=1}^\ell\beta_i\ge (2m-1)p
	\quad \text{ and }\quad
	\ell\le p-3\,,
\end{equation}
respectively. 

\begin{claim}\label{clm:1}
    Let $r,s,t\in[\ell]$. If $r$ is odd and $r+s+t\ge p+1$,
	 then $\beta_r+\beta_s+\beta_t\le p+1$ and $\beta_r+\beta_s\le p$. 
\end{claim}

\begin{proof}
    We distinguish two cases.
   
    \smallskip
 
    {\it \hskip2em First Case: $0\not\in C_r$}
        
    \smallskip

    As $r$ is odd, we have $C_r\ne -C_r$ and the 
    union $X=C_r\cup(-C_r)$ has at least the size $r+1$. 
    Thus the Cauchy-Davenport theorem yields
        \[
    		|X+C_t|\ge \min\{r+t,p\}\ge p+1-s\,,
    \]
        and, consequently, the sets $X+C_t$ and $C_s$ cannot be disjoint. 
    This means that there exist some numbers $\rho\le r$, 
    $\sigma\le s$, $\tau\le t$, and a sign $\eps\in\{-1, +1\}$ such 
    that $\eps b_\rho+b_\tau=b_\sigma$. 
    In view of $(\eps A+A)\cap A=\vn$ 
    we conclude $(\eps B_\rho+B_\tau)\cap B_\sigma=\vn$ 
    and Lemma~\ref{lem:42} reveals 
        \[
    	\beta_r+\beta_s+\beta_t
		\le 
		\frac{|B_\rho|+|B_\sigma|+|B_\tau|}{p^{n-2}}
		\le
		p+1\,.
	 \]
	 	 Moreover, we obtain  
	 	 \[
	 	\beta_r+\beta_s
		\le 
		\frac{|\eps B_\rho+B_\tau|+|B_\sigma|}{p^{n-2}}
		\le
		p\,.
	\]
	 
   \smallskip

   {\it \hskip2em Second Case: $0\in C_r$}

   \smallskip

   Take $\rho\leq r$ such that $b_{\rho}=0$. Following the arguments 
   from the previous case but using the equalities $b_{\rho}+b_s=b_s$ 
   and $b_{\rho}+b_t=b_t$ we see 
      \[
   		\beta_r+\beta_s+\beta_t
		\le
		\frac{\beta_{\rho}+2\beta_s}{2}+\frac{\beta_{\rho}+2\beta_t}{2}
		\le
		p+1
	\]
		and 
		\[
		\beta_r+\beta_s
		\le
		\frac{|B_\rho+B_s|+|B_s|}{p^{n-2}}
		\le
		p\,. \qedhere
	\]
	\end{proof}

\begin{claim}\label{clm:2}
    We have $\ell\le 2m+2$.
\end{claim}

\begin{proof}
    Assume for the sake of contradiction that $\ell\ge 2m+3$. 
    Three applications of Claim~\ref{clm:1} disclose
        \begin{align*}
    		\beta_{2m-3}+\beta_{2m}+\beta_{2m+3} &\le 6m\,, \\
			\beta_{2m-2}+\beta_{2m-1}+\beta_{2m+4}
			\le
			\beta_{2m-2}+\beta_{2m-1}+\beta_{2m+3} &\le 6m\,, \\
			\beta_{2m+1}+\beta_{2m+2}
			\le
			\tfrac23(2\beta_{2m+1}+\beta_{2m+2}) &\le 4m\,,
	 \end{align*}
	 	 and by adding these estimates we conclude
	 	 \begin{equation}\label{eq:2336}
	 		\sum_{i=2m-3}^{2m+4}\beta_i\le 16m\,.
	 \end{equation}
	 	 Next we contend 
	 	 \begin{equation}\label{eq:1}
       \beta_i+\beta_{p-2i-2}+\beta_{p-2i-1}
       \le p+1
   \end{equation}
      for every $i\in[2m-4]$. If $p-2i-2>\ell$ this follows immediately 
   from $\beta_i\le p$; on the other hand, if $p-2i-2\le\ell$, 
   then Claim~\ref{clm:1} implies the even stronger estimate 
   $\beta_i+2\beta_{p-2i-2}\le p+1$, because
      \[
   	i+2(p-2i-2)=2p-3i-4\ge 2p-3(2m-4)-4=p+7\,.
   \]
      Summing up the Inequalities~\eqref{eq:1} for all $i\in[2m-4]$
   and adding~\eqref{eq:2336} we arrive at
      \[
   	\sum_{i=1}^{p-3}\beta_i
		\le 
		(2m-4)(p+1)+16m
		=
		(2m-1)p-1
		\overset{\eqref{eq:0143}}{<}
		\sum_{i=1}^{\ell}\beta_i\,,
	\]
		which contradicts $\ell\le p-3$.
\end{proof}

\begin{claim}\label{clm:3}
We have $|B_{2m-1}|>\frac{p^{n-1}}2$.
\end{claim}
             
\begin{proof}
    If $\ell\le 2m$, we subtract $\beta_i\le p$ for all $i\in [2m-2]$ 
    from the left estimate in~\eqref{eq:0143}, thus getting 
    $\beta_{2m-1}+\beta_{2m}\ge p$.
    This implies $\beta_{2m-1}\ge \frac p2$ or, in other words, 
    $|B_{2m-1}|\ge \frac{p^{n-1}}2$. As~$p$ is odd, equality
    is impossible, and our claim follows. 
    
    Due to Claim~\ref{clm:2} it remains to discuss the 
    case $\ell\in \{2m+1, 2m+2\}$.
    Claim~\ref{clm:1} tells us $\beta_{2m-j}+\beta_{2m+j-1}\le p$ 
    for all $j\in\{1, 2, 3\}$ and thus we have 
        \[
    	(2m-1)p
		\le 
		\sum_{i=1}^{2m+2}\beta_i
		=
		\sum_{i=1}^{2m-4}\beta_i
			+\sum_{j=1}^3(\beta_{2m-j}+\beta_{2m+j-1})
		\le 
		\bigl((2m-4)+3\bigr)p
		=
		(2m-1)p\,.
	\]
		Equality needs to hold throughout this estimate and, 
	in particular, we have $\beta_{2m-1}+\beta_{2m}=p$, 
	which allows us to conclude the argument as in the 
	first case. 
\end{proof}        
        
From Claim~\ref{clm:3} and Lemma~\ref{Bdumm} 
we learn $B_i\pm B_j=\FF_p^{n-1}$ whenever $i,j \in [2m-1]$.
This shows that $C_{2m-1}\pm C_{2m-1}$ is disjoint to $C_\ell$ and,
in particular, that $C_{2m-1}$ is sum-free. Owing to 
Lemma~\ref{lem:6} we can assume that $C_{2m-1}$ is either the 
interval $[2m-1, 4m-3]$ or a subset of~$[2m, 4m-1]$. 

Suppose first that $C_{2m-1}=[2m-1,4m-3]$. Due to $m\ge 2$ this 
implies 
\[
	C_{2m-1}\pm C_{2m-1}
	=
	\FF_p\sm C_{2m-1}
\]
and thus we have $\ell=2m-1$ and 
$A\subseteq [2m-1, 4m-3]\times \FF_p^{n-1}$. 
Because of $|A|\ge (2m-1)p^{n-1}$ this needs to hold with equality. 
Since $[2m-1, 4m-3]$ is a very structured 
subset of~$\FF_p$, it follows that~$A$ is indeed structured.    

It remains to deal with the case that~$C_{2m-1}$ arises from 
$[2m,4m-1]$ by deleting one element. For reasons of symmetry we 
can suppose $2m\in C_{2m-1}$. Now a short calculation in~$\FF_p$ 
yields 
\begin{align*}
    C_{2m-1}+(\pm C_{2m-1})&=C_{2m-1}+[2m,4m-1]\\
    &\supseteq[4m,8m-3]
    =[0,2m-2]\cup[4m,p-1]\,.
\end{align*}     
Therefore, $C_\ell$ is a subset of $[2m-1, 4m-1]$, 
whence $A\subseteq [2m-1, 4m-1]\times \FF_p^{n-1}$.
By Lemma~\ref{prop:21} this implies that $A$ is structured.  
\end{proof}           

When one attempts to continue the foregoing argument, it 
turns out that even if the smallest set~$I$ 
with $A\subseteq I\times \FF_p^{n-1}$ has at least the size $p-2$
there is something interesting we can say. 

\begin{proposition}\label{prop:23}
	Given a prime number $p=6m-1\ge 11$ and a dimension $n\ge 2$ 
	let~$(A_k)_{k\in \FF_p}$
	be a family of subsets of $\FF_p^{n-1}$. 
	If $A=\bigdcup_{k\in \FF_p} \{k\}\times A_k$ 
	is a sum-free set of size $|A|\ge (2m-1)p^{n-1}$
	and at most two of the sets $A_k$ are empty, 		
	then there exists a real number~$U$ such 
	that $\sum_{k\in \FF_p}\big||A_k|-U\big|\le 2p^{n-1}$.
\end{proposition}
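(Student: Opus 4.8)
The plan is to combine the cardinality bounds for sum-free sets with the Cauchy–Davenport theorem in $\FF_p$, working with the profile $a_k=|A_k|$ as a function on $\FF_p$. Since $A$ is sum-free, whenever $i+j=k$ in $\FF_p$ we have $(A_i+A_j)\cap A_k=\varnothing$, so Lemma~\ref{lem:42} (applied in $\FF_p^{n-1}$) gives $a_i+a_j+a_k\le (p+1)p^{n-2}$ for every additive triple; in particular $a_i+a_i+a_{2i}\le (p+1)p^{n-2}$, and also the weaker $a_i+a_j\le p^{n-1}$ for each such triple via Cauchy–Davenport on the supports. Normalising by $p^{n-1}$, set $\alpha_k=a_k/p^{n-1}\in[0,1]$; then $\sum_k\alpha_k\ge 2m-1=\frac{p-1}{3}$ and $\alpha_i+\alpha_j+\alpha_k\le \frac{p+1}{p}$ whenever $i+j=k$. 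I would first pass to the permutation of $\FF_p$ that orders the $\alpha_k$ decreasingly, as in Proposition~\ref{prop:22}, and invoke the machinery of Claims~\ref{clm:1}–\ref{clm:3} there (the hypothesis that at most two $A_k$ are empty ensures $\ell\ge p-2$, so those claims apply with only cosmetic changes): this yields that at least $2m-1$ of the $\alpha_k$ exceed $\tfrac12$, and more precisely pins down the ``bulk'' of the mass.

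The next step is to extract the right constant $U$. The natural guess is $U=\tfrac13 p^{n-1}$ (equivalently the normalised target is $\tfrac13$), since a structured set has all column-sizes either near $0$, near $p^{n-1}$, or exactly $p^{n-1}$, and the threshold for ``large'' vs ``small'' columns sits at the midpoint. I would split $\FF_p$ into $S=\{k:\alpha_k>\tfrac12\}$ and its complement. From Claim~\ref{clm:3}-type arguments we know $|S|\ge 2m-1$; from the triple inequalities applied to $k\in S$ paired with large indices one gets an upper bound $|S|\le 2m+2$ (as in Claim~\ref{clm:2}). On $S$, the bound $\alpha_i+\alpha_j+\alpha_k\le\frac{p+1}{p}$ with all three terms $>\tfrac12$ forces each $\alpha_k$ close to $1$ (two of them summing to more than $1$ leaves at most $\frac{p+1}{p}-1=\frac1p$ for the third when it too is large, a contradiction unless we are careful — so really one uses that for $k\in S$ and any additive partner pair, the complement sizes $1-\alpha$ are small and add up along the lines), giving $\sum_{k\in S}(1-\alpha_k)\le C/p$ for an absolute constant. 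Dually, on $\FF_p\setminus S$ the counting identity $\sum_k\alpha_k=|S|-\sum_{k\in S}(1-\alpha_k)+\sum_{k\notin S}\alpha_k$ combined with $\sum_k\alpha_k\ge\frac{p-1}{3}$ and $|S|\le 2m+2=\frac{p+11}{6}$... — here I need to recheck the arithmetic, but the point is that $\sum_{k\notin S}\alpha_k$ is bounded by a small absolute constant too.

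Granting those two one-sided estimates — $\sum_{k\in S}(1-\alpha_k)=O(1)$ and $\sum_{k\notin S}\alpha_k=O(1)$ with the implied constants summing to at most $2$ after multiplying back by $p^{n-1}$ — one takes $U=p^{n-1}$ on $S$... no: since $U$ must be a single real number, the cleanest choice is to note that both $|S\cdot p^{n-1}$-worth of mass sits at $\approx p^{n-1}$ and the rest at $\approx 0$, so $\sum_k\big|a_k-U\big|$ is minimised near $U=0$ or $U=p^{n-1}$ and equals roughly $(\text{small part}) + (\text{deficit of large part from }p^{n-1})$; choosing $U=0$ gives $\sum_k|a_k-0|=|A|$ which is too big, so instead one should take $U$ to be $p^{n-1}$ and bound $\sum_k|a_k-p^{n-1}|=\sum_{k\in S}(p^{n-1}-a_k)+\sum_{k\notin S}(p^{n-1}-a_k)$ — the second sum is about $(p-|S|)p^{n-1}$, still too large. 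The resolution: $U$ is neither $0$ nor $p^{n-1}$ but is chosen so the median of the $a_k$ equals $U$; since $|S|\ge 2m-1$ and $|S^c|\ge p-(2m+2)=4m-3\ge 2m-1$ for $m\ge2$... actually $4m-3\ge 2m-1 \iff m\ge 1$, and $|S^c|$ could be as small as $4m-3$ while $|S|$ as small as $2m-1$, so the median index sits in $S^c$, giving $U$ small.

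**The main obstacle.** The delicate point — and the step I expect to dominate the proof — is getting the constants to land at exactly $2p^{n-1}$ rather than some larger $Cp^{n-1}$. The crude triple bounds lose a bit, and one must be economical: sum the inequalities $\alpha_i+\alpha_j+\alpha_k\le\frac{p+1}{p}$ over a carefully chosen collection of additive triples that covers each index the right number of times (a ``rotation'' family $\{i,\,t_0+i,\,s_0-2i\}$ or similar, exactly as in the proof of Claim~\ref{clm:2}), so that the total loss telescopes to $\le 2p^{n-1}$. I would set $U = p^{n-1}\cdot\alpha_{\text{med}}$ where $\alpha_{\text{med}}$ is the $(2m)$-th largest value (so roughly $2m$ values lie above $U$ and $4m-1-2m+1=2m$ below — wait, $p=6m-1$ so there are $6m-1$ values, median is the $3m$-th), and then $\sum_k|a_k-U| = \sum_{\alpha_k>\alpha_{\text{med}}}(a_k-U)+\sum_{\alpha_k<\alpha_{\text{med}}}(U-a_k)$; bound the first sum by $\sum_{k\in S}(p^{n-1}-a_k)+|S|\cdot(p^{n-1}-U)$ using $\alpha_{\text{med}}\le$ small, and the second by $(\text{median index count})\cdot U \le$ small, each piece being $O(1)\cdot p^{n-1}$. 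Making ``$O(1)$'' into ``together at most $2$'' is the entire content; I would do it by the same weighted sum of Claim~\ref{clm:1} inequalities used to derive \eqref{eq:2336} and \eqref{eq:1}, which already achieves near-equality in the total mass count, so the slack there is precisely what bounds $\sum_k\big||A_k|-U\big|$.
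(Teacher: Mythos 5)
Your overall direction matches the paper's: sort the column sizes, use Claim~\ref{clm:1}-type triple bounds, and take $U$ to be the median column size $|B_{3m}|$. But there is a genuine error in the plan: you propose to ``invoke the machinery of Claims~\ref{clm:1}--\ref{clm:3} there (the hypothesis that at most two $A_k$ are empty ensures $\ell\ge p-2$, so those claims apply with only cosmetic changes)''. In fact Claims~\ref{clm:2} and~\ref{clm:3} are proved under the hypothesis $\ell\le p-3$ (coming from $|I|\le p-3$ in Proposition~\ref{prop:22}), and Claim~\ref{clm:2} literally derives a contradiction with $\ell\le p-3$; in the regime $\ell\ge p-2$ of Proposition~\ref{prop:23} its conclusion $\ell\le 2m+2$ is false, and Claim~\ref{clm:3} (which relies on Claim~\ref{clm:2}) fails with it. The paper explicitly records that only Claim~\ref{clm:1} survives. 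Thus your claims that ``at least $2m-1$ of the $\alpha_k$ exceed $\tfrac12$'', that $|S|\le 2m+2$, and the ensuing estimates $\sum_{k\in S}(1-\alpha_k)=O(1)$ and $\sum_{k\notin S}\alpha_k=O(1)$ are not justified by anything you cite, and the whole $S/S^c$ split around the threshold $\tfrac12$ is a dead end here.

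Beyond that, the step you yourself flag as ``the entire content'' — getting the constant to land at exactly $2$ — is not carried out, and it is exactly where the paper does real work. The paper's argument is cleaner than what you sketch: with $U=|B_{3m}|$ there are $3m-1$ indices on each side, so the $U$-terms cancel and $\sum_k\big||B_k|-U\big|=p^{n-2}\bigl(\sum_{k<3m}\beta_k-\sum_{k>3m}\beta_k\bigr)$; one then sums Claim~\ref{clm:1} over the specific family of triples $(2i-1,\,2i,\,p+j-3i)$ for $(i,j)\in[m]\times[3]$, supplements with $3\beta_{2m+1}\le p+1$, and subtracts twice the lower bound $\sum_k\beta_k\ge(2m-1)p$ to obtain the bound $2p$ directly. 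Your proposal gestures at ``a rotation family $\{i,\,t_0+i,\,s_0-2i\}$'' without identifying the right one or verifying that its weights telescope, so as written it does not establish the stated inequality.
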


\begin{proof}	We start with the same enumeration $\FF_p=\{b_1,\dots, b_p\}$ 
	and decomposition 
		\[
		A=\bigdcup_{i\in [p]}\,\{b_i\}\times B_i
	\]
		as in the previous proof. 
	Setting again $\ell=\max\{i\in[p]\colon B_i\ne\vn\}$ 
	and $\beta_i=|B_i|/p^{n-2}$ for every $i\in [p]$ we have 
		\[
		p\ge \beta_1\ge\beta_2\ge\dots\ge \beta_\ell>0\,,
		\quad \quad 
		\beta_{\ell+1}=\dots=\beta_p=0\,,
	\]
		and 
		\begin{equation}\label{eq:2212}
		\sum_{i=1}^\ell\beta_i\ge (2m-1)p\,.
	\end{equation}
		The assumption that at most two of the sets $A_k$ be empty 
	yields $\ell \ge p-2$. As the proof of Claim~\ref{clm:1} 
	does not depend on the value of $\ell$, that statement is 
	still available to us.
	
	For all pairs $(i, j)\in [m]\times [3]$ 
	we have $(2i-1)+2i+(p+j-3i)=p+i+j-1\ge p+1$ 
	and, therefore, Claim~\ref{clm:1} yields 
		\[
		\beta_{2i-1}+\beta_{2i}+\beta_{p+j-3i}
		\le 
		p+1\,.
	\]
		Summing over all pairs $(i, j)$ we deduce
		\begin{equation}\label{eq:0038}
		3\sum_{k=1}^{2m}\beta_k+\sum_{k=3m}^{p}\beta_k
		\le
		3m(p+1)=18m^2\,.
	\end{equation}
		Due to $3\beta_{2m+1}\le p+1$ we have $\beta_{2m+1}\le 2m$ 
	and, consequently,
		\[
		3\sum_{k=2m+1}^{3m-1}\beta_k+\beta_{3m}\le 2(3m-2)m\,.
	\]
		By adding~\eqref{eq:0038} we infer 
		\[
		3\sum_{k=1}^{3m-1}\beta_k+2\beta_{3m}+\sum_{k=3m+1}^{p}\beta_k
		\le 
		24m^2-4m=4mp\,.
	\]
		Now we subtract the double of~\eqref{eq:2212} and multiply 
	by $p^{n-2}$, thereby obtaining 
		\[
		\sum_{k=1}^p \big||B_k|-|B_{3m}|\big|
		=
		p^{n-2}\Bigl(\sum_{k=1}^{3m-1}\beta_k-\sum_{k=3m+1}^{p}\beta_k\Bigr)
		\le 
		2p^{n-1}\,.
	\]
		This shows that the number $U=|B_{3m}|$ is as required. 
\end{proof}

Intuitively, the estimate 
$\sum_{k\in \FF_p}\big||A_k|-U\big|\le 2p^{n-1}$ means that the 
map $k\longmapsto |A_k|$ does not deviate too much from the constant 
function whose value is always $U$, which gives us a fair amount of 
control over the Fourier coefficients of $A$. 
In fact, for $p\ge 17$ this information is enough to conclude 
the proof of Theorem~\ref{thm:2}, but in the special case $p=11$ 
we need to argue much more carefully to achieve the same goal.

\begin{prop}\label{prop:24}
	Suppose that $n\ge 2$ and at most two of the sets 
	$A_0, \dots, A_{10}\subseteq\FF_{11}^{n-1}$ are empty. 
	If $A=\bigdcup_{k\in \FF_{11}}\{k\}\times A_k$ is a sum-free 
	subset of $\FF_{11}^n$ of size $|A|\ge 3\cdot 11^{n-1}$, then 
		\[
		\sum_{k=1}^{10}\left(1-\cos\frac{2k\pi}{11}\right)|A_k|
		<
		\frac{11|A|}8\,.
	\]
	\end{prop}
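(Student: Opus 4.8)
The plan is to set up Fourier analysis on $\FF_{11}^n$, treat $A$ as (the indicator of) a subset, and use the sum-free condition $(A+A)\cap A=\vn$ together with a weighting by the nonnegative function $k\longmapsto 1-\cos\frac{2k\pi}{11}$ in the first coordinate. Concretely, for $\zeta=e^{2\pi i/11}$ consider the character $\chi(x)=\zeta^{x_1}$ depending only on the first coordinate. The sum on the left is, up to scaling, the inner product of the `weight' with the fibre-sizes $|A_k|$, and one recognises
\[
	\sum_{k=1}^{10}\Bigl(1-\cos\tfrac{2k\pi}{11}\Bigr)|A_k|
	=
	|A|-\Real\sum_{k=0}^{10}\zeta^k|A_k|
	=
	|A|-\Real\,\wh{\mathds 1_A}(\chi)\,,
\]
so the claim is equivalent to $\Real\,\wh{\mathds 1_A}(\chi)>-\frac{3|A|}8$, i.e. a lower bound on a single Fourier coefficient of $A$ along the first coordinate.

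**Next I would** extract the sum-free inequality. Since $A$ is sum-free, $\mathds 1_A * \mathds 1_A$ vanishes on $A$, so $\sum_{x}\mathds 1_A(x)(\mathds 1_A*\mathds 1_A)(x)=0$; Fourier-expanding gives $\sum_\psi \wh{\mathds 1_A}(\psi)^2\,\overline{\wh{\mathds 1_A}(\psi)}=0$ over all characters $\psi$ of $\FF_{11}^n$ (up to normalisation by $11^n$), hence
\[
	\Real\sum_{\psi}\wh{\mathds 1_A}(\psi)^2\,\overline{\wh{\mathds 1_A}(\psi)}=0\,.
\]
The trivial character contributes $|A|^3/11^n>0$, so the nontrivial characters must contribute a matching negative real part; isolating the distinguished character $\chi$ (and its conjugate $\bar\chi$, which contributes the complex conjugate) and bounding all the other nontrivial terms crudely by $\max_{\psi\ne 1,\chi,\bar\chi}|\wh{\mathds 1_A}(\psi)|\cdot\sum_\psi|\wh{\mathds 1_A}(\psi)|^2 = \max|\wh{\mathds 1_A}(\psi)|\cdot 11^n|A|$ (Parseval) gives a relation of the shape
\[
	2\,\Real\bigl(\wh{\mathds 1_A}(\chi)^2\,\overline{\wh{\mathds 1_A}(\chi)}\bigr)
	\ge
	\tfrac{|A|^3}{1}-(\text{error})\,,
\]
which, together with $|\wh{\mathds 1_A}(\chi)|\le|A|$, forces $\Real\,\wh{\mathds 1_A}(\chi)$ not to be too negative. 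The hypothesis that at most two fibres are empty, i.e. $\ell\ge p-2$, enters exactly here: it is what makes $|A|\ge 3\cdot 11^{n-1}$ large relative to the possible size of the remaining Fourier coefficients, and it is also the place to invoke Proposition~\ref{prop:23} (the $L^1$-bound $\sum_k\big||A_k|-U\big|\le 2p^{n-1}$), since the $\beta$-profile being nearly constant limits how much mass the characters that are nontrivial only in the first coordinate can carry, while characters nontrivial in the last $n-1$ coordinates are controlled by treating each fibre separately.

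**The main obstacle** I anticipate is getting the constant $3/8$ (equivalently $11/8$) sharp enough: the crude Parseval bound on the `other' characters is too lossy, so the argument must distinguish the three families of nontrivial characters — those depending only on $x_1$ (handled by the one-dimensional structure, Lemma~\ref{lem:6}, and Proposition~\ref{prop:23}), those trivial in $x_1$ (handled fibre-wise, each $A_k$ being sum-free-ish and bounded via Lemma~\ref{lem:5} applied in $\FF_{11}^{n-1}$), and the `mixed' ones — and estimate each with its own near-optimal bound before recombining. A secondary subtlety is that equality cases in Cauchy--Davenport/Vosper (Theorems~\ref{thm:CD} and~\ref{thm:Vosper}) can make certain Fourier coefficients unexpectedly large, so one should either rule those out using $|A|\ge 3\cdot 11^{n-1}$ together with Proposition~\ref{prop:22} (which already settles the case where $A$ lives in a set $I\times\FF_{11}^{n-1}$ with $|I|\le p-3$), or absorb them into the strict inequality. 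I expect the bulk of the work to be a careful but elementary optimisation over the real and imaginary parts of the relevant Fourier coefficients, using $|A|\le\frac{p-2}{3}p^{n-1}$ only as a sanity check and never as an input.
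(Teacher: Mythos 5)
The route you describe cannot prove Proposition~\ref{prop:24}; the Fourier identity you invoke pushes in the \emph{opposite} direction from what the proposition asserts. You correctly observe that the claim is equivalent to the lower bound $\Real\,\wh{A}(\chi)>-\tfrac{3|A|}{8}$ for the specific character $\chi(x)=\zeta^{x_1}$. But the sum-free identity $\sum_{\psi}\wh{A}(\psi)^2\overline{\wh{A}(\psi)}=0$, equivalently $\sum_{\psi\ne 1}|\wh{A}(\psi)|^2\,\Real\,\wh{A}(\psi)=-|A|^3$, only tells you that the \emph{weighted average} of $\Real\,\wh{A}(\psi)$ over nontrivial $\psi$ is negative, hence some character has very negative real part (this is precisely Lemma~\ref{lem:four}, used in the proof of Theorem~\ref{thm:2}). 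It provides no mechanism to prevent your distinguished $\chi$ from being that very character, so isolating $\chi,\bar\chi$ and pushing the rest into an error term cannot yield a lower bound on $\Real\,\wh{A}(\chi)$ without an independent upper bound on the contribution of every other nontrivial character — and the hypotheses ($|A|\ge 3\cdot 11^{n-1}$, at most two empty fibres) give you no such control on characters that are trivial or nontrivial in the last $n-1$ coordinates. The architecture is in fact the reverse of what you propose: in the paper, the Fourier identity supplies the lower bound $\sum_k(1-\cos\tfrac{2k\pi}{11})|A_k|\ge\tfrac{11|A|}{8}$, while Proposition~\ref{prop:24} is established by a purely additive-combinatorial argument and is then played against the Fourier bound to obtain a contradiction.

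A second, concrete gap is your intended use of Proposition~\ref{prop:23}. Its conclusion $\sum_k\bigl||A_k|-U\bigr|\le 2p^{n-1}$ gives only $|\wh{A}(\chi)|\le 2\cdot 11^{n-1}\le\tfrac{2}{3}|A|$ when $|A|\ge 3\cdot 11^{n-1}$, and $\tfrac{2}{3}>\tfrac{3}{8}$, so this is strictly too weak for $p=11$. This is exactly why the paper introduces Proposition~\ref{prop:24} as a separate, more delicate case: it proceeds by contradiction, converts the negated conclusion into the linear inequality~\eqref{eq:1150} in the symmetrised fibre densities $\beta_i=\alpha_i+\alpha_{11-i}$, and then combines it with the Kneser-based triple inequalities of Claim~\ref{nvmifempty}, the three-set Lemma~\ref{lem:ABCD}, and the disjointness of $A_4-A_9$ from $A_6$ (and symmetrically $A_7-A_2$ from $A_6$) to force $\beta_5>11$, contradicting $\alpha_5+\alpha_6\le 11$. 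No Fourier analysis appears in that proof; the character weights $1-\cos\tfrac{2k\pi}{11}$ enter only as fixed numerical coefficients in a linear-programming style estimate.
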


\begin{proof}
The quotients $\alpha_i=\frac{|A_i|}{11^{n-2}}$ 
and $\alpha=\frac{|A|}{11^{n-2}}$ satisfy $\alpha_i\le 11$
for all $i\in \FF_{11}$ and 
\[
	33\le\alpha=\sum_{i\in \FF_{11}}\alpha_i\,.
\]
	
\begin{claim}\label{nvmifempty}
   Whenever $i, j\in \FF_{11}$ are distinct and nonzero, we 
   have $\alpha_i+\alpha_j\le 11$; if additionally $i+j\ne 0$, 
	then $\alpha_i+\alpha_j+\alpha_{i+j}\le 12$.
\end{claim}

\begin{proof}
	If we had $\alpha_i+\alpha_j>11$, or in other words 
	$|A_i|+|A_j|>11^{n-1}$, then Lemma~\ref{Bdumm} would imply 
	$A_{i+j}=A_{i-j}=A_{j-i}=\vn$, contrary to the assumption 
	that at most two of the sets $A_k$ be empty. 
	The second assertion follows from the first one if one 
	of $A_i$, $A_j$, $A_{i+j}$ is empty and from Lemma~\ref{lem:42} 
	otherwise.
\end{proof}

By adding symmetric pairs of estimates derived from 
Claim~\ref{nvmifempty} we find  
\begin{align*}
	(\alpha_1+\alpha_{10})+2(\alpha_5+\alpha_6) &\le 24\,,\\
	(\alpha_2+\alpha_9)+(\alpha_4+\alpha_7)+(\alpha_5+\alpha_6)
		&\le 24\,, \\
	(\alpha_3+\alpha_8)+2(\alpha_4+\alpha_7)&\le 24\,,\\
   2(\alpha_3+\alpha_8)+(\alpha_5+\alpha_6)&\le 24\,.
\end{align*}
In terms of the sums $\beta_i=\alpha_i+\alpha_{11-i}$ 
(where $i\in [5]$) this simplifies to 
\begin{equation}\label{eq:1125}
	\max\{\beta_1+2\beta_5, \beta_2+\beta_4+\beta_5, 
		\beta_3+2\beta_4, 2\beta_3+\beta_5\}\le 24\,.
\end{equation}

Clearly, 
\begin{equation}\label{eq:1156}
	\beta_1+\beta_2+\beta_3+\beta_4+\beta_5\le\alpha
\end{equation}
and assuming that our proposition fails we have
\[
	\frac{11\alpha}8
	\le
	\sum_{k=1}^5\Bigl(1-\cos\frac{2k\pi}{11}\Bigr)\beta_k\,.
\]
Approximating the cosines we derive 
\begin{equation}\label{eq:1150}
	1.375\cdot\alpha
	\le
	0.159\cdot\beta_1+0.585\cdot\beta_2+1.143\cdot\beta_3
	+1.655\cdot\beta_4+1.96\cdot\beta_5\,.
\end{equation}

In combination with~\eqref{eq:1125} and~\eqref{eq:1156} this tells us 
\begin{align*}
    1.375\cdot\alpha
    &\le
    0.159\cdot(\beta_1+\beta_2+\beta_3+\beta_4+\beta_5)
    +0.426\cdot(\beta_2+\beta_4+\beta_5)\\
    &\qquad+
    0.535\cdot(\beta_3+2\beta_4)
    		+0.225\cdot(2\beta_3+\beta_5)+1.15\cdot\beta_5\\
    &\le
    0.159\cdot \alpha+(0.426+0.535+0.225)\cdot 24+1.15\cdot\beta_5\,,
\end{align*}
i.e., $1.216\cdot \alpha\le28.464+1.15\cdot \beta_5$. 
Since $\alpha\ge 33$, this leads to   
\begin{equation}\label{eq:beta5}
    10<\beta_5\,.
\end{equation}
Using the additional inequality $\beta_5\leq11$, which follows from 
Claim~\ref{nvmifempty}, we obtain similarly
\begin{align*}
    45.375
    &\le 
    1.375\cdot\alpha \\
    &\le
    0.159\cdot(\beta_1+2\beta_5)
    		+0.585\cdot(\beta_2+\beta_4+\beta_5) \\
    &\qquad+
    0.535\cdot(\beta_3+2\beta_4)
    		+0.304\cdot(2\beta_3+\beta_5)+0.753\cdot\beta_5\\
    &\le
    (0.159+0.535+0.304)\cdot 24+0.753\cdot 11 
    		+0.585\cdot(\beta_2+\beta_4+\beta_5)\,,
\end{align*}
whence
\begin{equation}\label{eq:224}
    \beta_2+\beta_4+\beta_5 > 22.4\,.
\end{equation}

Since $A_4-A_9$ and $A_6$ are disjoint, we have 
\[
	\frac{|A_4-A_9|}{11^{n-2}}
	\le
	11-\alpha_6
	\overset{\eqref{eq:beta5}}{<}
	\alpha_5+1\,,
\]
or, in other words, 
\begin{equation}\label{eq:2248}
	|A_4-A_9| < |A_5|+11^{n-2}\,.
\end{equation}
A symmetric argument also establishes 
\begin{equation}\label{eq:2249}
	|A_7-A_2| < |A_6|+11^{n-2}\,.
\end{equation}

By Inequality~\eqref{eq:224} we may assume that
\[
    \alpha_4+\alpha_5+\alpha_9>11.2>11+\tfrac{1}{11}\,.
\]
Together with Claim~\ref{nvmifempty} and Inequality~\eqref{eq:2248}
this shows that the sets $A_4$, $A_5$, and $A_9$ satisfy the 
assumptions of Lemma~\ref{lem:ABCD}. 
Consequently, we have 
\begin{equation}\label{eq:496}
	\alpha_4+\alpha_9\le 6
\end{equation} 
and $A_5-A_5=\FF_{11}^{n-1}$, whence $A_0=\vn$. 
For this reason,~\eqref{eq:1156} holds with equality, i.e.,
\[
    \beta_1+\beta_2+\beta_3+\beta_4+\beta_5=\alpha\ge 33\,.
\]
In view of Inequalities~\eqref{eq:1125} and~\eqref{eq:1150}
this shows 
\begin{align*}
		1.677\cdot 33
		&\le
     1.375\cdot\alpha+0.302\cdot(\beta_1+\beta_2+\beta_3+\beta_4)
     	+0.304\cdot\beta_5\\
    	&\le
		0.461\cdot(\beta_1+2\beta_5)
		+0.887\cdot(\beta_2+\beta_4+\beta_5)\\
		&\quad\qquad 
		+0.535\cdot(\beta_3+2\beta_4)+0.455\cdot(2\beta_3+\beta_5)\\
     &\le
     (0.461+0.535+0.455)\cdot 24
     +0.887\cdot(\beta_2+\beta_4+\beta_5)\,,
\end{align*}
whence 
\begin{equation}\label{eq:1424}
    \beta_2+\beta_4+\beta_5>23.1\,.
\end{equation}

By subtracting $\alpha_4+\alpha_5+\alpha_9\le 12$ we infer
\[
    \alpha_2+\alpha_6+\alpha_7>11.1>11+\tfrac 1{11}\,.
\]
Recalling~\eqref{eq:2249} we can apply Lemma~\ref{lem:ABCD}
to $A_7$, $A_6$, and $A_2$ as well, thereby deducing 
$\alpha_2+\alpha_7\le 6$.  
The addition of~\eqref{eq:496} gives $\beta_2+\beta_4\le 12$.
But now~\eqref{eq:1424} discloses $\beta_5>11$, which contradicts
Claim~\ref{nvmifempty}.
\end{proof}

\section{Fourier analysis}\label{sec:2}

Based on the results of the previous section we can now 
complete the proof of Theorem~\ref{thm:2} by means of a  
Fourier analytic argument. For definiteness we fix some 
notation. 

Given a finite abelian group $G$, we denote its {\it Pontryagin 
dual}, that is the group of homomorphisms from $G$ to the 
unit circle in the complex plane, by $\wh{G}$. For a function 
$f\colon G\lra\CC$, its {\it Fourier transform} 
$\wh{f}\colon\wh{G}\lra\CC$ is defined by 
\[
	\wh{f}(\gamma)=\sum_{x\in G}f(x)\overline{\gamma(x)}
\]
for all $\gamma\in\wh{G}$. 
Subsets of $G$ are identified with their characteristic 
functions. The following statement is well-known 
(cf.\ e.g.,~\cite{Gr05}*{Lemma~7.1}), but for the sake of 
completeness we provide a brief sketch of its proof. 

\begin{lemma}\label{lem:four}
	If $A$ denotes a sum-free subset of a finite abelian 
	group $G\ne 0$, then there exists a non-trivial 
	character $\gamma\in \wh{G}$ such that 
		\[
		\Real \wh{A}(\gamma)\le \frac{-|A|^2}{|G|-|A|}\,.
	\]
	\end{lemma}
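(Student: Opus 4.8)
The plan is to exploit the two standard identities linking $\wh{A}$ to additive structure: on the one hand $\sum_{\gamma\in\wh{G}}\wh{A}(\gamma)=|G|\cdot A(0)$, which encodes the fact that $A$ contains few solutions to $x+y=0$ with $x,y\in A$; on the other hand, the count of solutions to $x+y=z$ with $x,y,z\in A$ equals $\frac{1}{|G|}\sum_{\gamma}\wh{A}(\gamma)^2\overline{\wh{A}(\gamma)}$ by Parseval/convolution, and sum-freeness forces this count to vanish. First I would write the number of triples $(x,y,z)\in A^3$ with $x+y=z$ as
\[
	0=\sum_{x,y\in A}A(x+y)
	 =\frac{1}{|G|}\sum_{\gamma\in\wh{G}}\wh{A}(\gamma)^2\,\overline{\wh{A}(\gamma)}\,.
\]
Separating off the trivial character $\gamma_0$, for which $\wh{A}(\gamma_0)=|A|$, gives
\[
	|A|^3+\sum_{\gamma\ne\gamma_0}\wh{A}(\gamma)^2\,\overline{\wh{A}(\gamma)}=0\,,
\]
and taking real parts yields $\sum_{\gamma\ne\gamma_0}\Real\bigl(\wh{A}(\gamma)^2\overline{\wh{A}(\gamma)}\bigr)=-|A|^3$. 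The elementary bound $\Real\bigl(w^2\bar w\bigr)=|w|^2\Real(w)\le |w|^2\max(0,\Real w)\le$ \dots is not quite what we want directly; instead I would use $\Real\bigl(\wh{A}(\gamma)^2\overline{\wh{A}(\gamma)}\bigr)\ge |\wh{A}(\gamma)|^2\cdot\min_{\delta\ne\gamma_0}\Real\wh{A}(\delta)$, so that writing $M=\min_{\gamma\ne\gamma_0}\Real\wh{A}(\gamma)$ we get
\[
	-|A|^3\ge M\sum_{\gamma\ne\gamma_0}|\wh{A}(\gamma)|^2\,.
\]
By Parseval, $\sum_{\gamma\in\wh{G}}|\wh{A}(\gamma)|^2=|G|\,|A|$, so $\sum_{\gamma\ne\gamma_0}|\wh{A}(\gamma)|^2=|G|\,|A|-|A|^2=|A|(|G|-|A|)$. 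Since this quantity is positive and the left side $-|A|^3$ is negative, $M$ must be negative, and dividing through gives
\[
	M\le\frac{-|A|^3}{|A|(|G|-|A|)}=\frac{-|A|^2}{|G|-|A|}\,,
\]
so any character $\gamma$ attaining (or merely approximating within the finite set $\wh{G}$) the minimum $M$ works. Note $0\in A$ would make $A$ not sum-free unless $A=\{0\}$ is excluded by $x=y=z=0$ — in any case $A(0)=0$ for a nonempty sum-free set, which is what makes the triple count start cleanly; if $A=\emptyset$ the statement is vacuous, and we may assume $A\ne\emptyset$.

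The only mild subtlety — and the step I would be most careful about — is the inequality $\Real\bigl(\wh{A}(\gamma)^2\overline{\wh{A}(\gamma)}\bigr)\ge M\,|\wh{A}(\gamma)|^2$: this needs the factorisation $\wh{A}(\gamma)^2\overline{\wh{A}(\gamma)}=|\wh{A}(\gamma)|^2\wh{A}(\gamma)$, after which taking real parts gives exactly $|\wh{A}(\gamma)|^2\Real\wh{A}(\gamma)\ge |\wh{A}(\gamma)|^2 M$ by definition of $M$ as the minimum over nontrivial characters. Summing over $\gamma\ne\gamma_0$ and invoking Parseval then finishes the argument; no further estimates are needed. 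I would present this as a three-line computation rather than belabour it, since it is the classical Green--Ruzsa inequality and the excerpt already flags it as well known.
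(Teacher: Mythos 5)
Your proposal is correct and follows essentially the same route as the paper's proof: expand the count of solutions to $x+y=z$ in Fourier, observe it vanishes by sum-freeness, use $\wh{A}(\gamma)^2\overline{\wh{A}(\gamma)}=|\wh{A}(\gamma)|^2\wh{A}(\gamma)$ to take real parts, bound the nontrivial terms below by $M\,|\wh{A}(\gamma)|^2$, and finish with Parseval. The paper phrases the starting identity as $\sum_\gamma\wh{A}(\gamma)^2\wh{A}(-\gamma)=0$ and calls the minimum $K$ rather than $M$, but these are purely cosmetic differences.
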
 

\begin{proof}
	Since the equation $a+a'-a''=0$ has no solutions 
	with $a, a', a''\in A$, we have 
		\[
		\sum_{\gamma\in \wh{G}}\bigl(\wh{A}(\gamma)\bigr)^2
		\wh{A}(-\gamma)=0\,,
	\]
		whence 
		\[
		\sum_{\gamma\in \wh{G}}\big|\wh{A}(\gamma)\big|^2\, 
		\Real \wh{A}(\gamma)=0\,.
	\]
		
	The trivial character contributes $|A|^3$ to the left side. 
	So if $K$ denotes the minimum value of $\Real \wh{A}(\gamma)$ 
	as $\gamma$ ranges over the non-trivial characters, we obtain 
		\[
		K\Bigl(-|A|^2+\sum_{\gamma\in \wh{G}}|\wh{A}(\gamma)|^2\Bigr)
		\le 
		-|A|^3\,.
	\]
		By Parseval's identity, the sum on the left side evaluates to $|G||A|$ 
	and the result follows.
\end{proof}

\begin{proof}[Proof of Theorem~\ref{thm:2}]
	Given a prime number $p=6m-1\ge 11$ and a dimension $n\ge 1$ 
	let $A\subseteq\FF_p^n$ be a sum-free set of 
	size $|A|\ge (2m-1)p^{n-1}$. 
	We need to show that $A$ is either structured or isomorphic 
	to a subset of $[2m-1, 4m]\times \FF_p^{n-1}$. 
	Assuming that this fails, Lemma~\ref{lem:5}
	informs us that $A\in\SFR_1(\FF_p^n)$ and Lemma~\ref{lem:6}
	yields~$n\ge 2$.

	By Lemma~\ref{lem:four} there exists 
	a non-trivial character $\gamma\in\wh{\FF_p^n}$ such that 
		\[
		\Real \wh{A}(\gamma)
		\le 
		\frac{-|A|^2}{p^n-|A|}\,.
	\]
		For reasons of symmetry we may assume that $\gamma$ sends each 
	point $(x_1, \dots, x_n)\in \FF_p^n$ to $e^{2x_1\pi i/p}$.
	In terms of the sets $A_0, \dots, A_{p-1}\subseteq\FF_p^{n-1}$ 
	determined by 
		\[	
		A=\bigdcup_{k\in\FF_p}\{k\}\times A_k
	\]
		we can rewrite the above estimate as 
		\begin{equation}\label{eq:2114}
		\Real \sum_{k=0}^{p-1}|A_k|e^{2k\pi i/p}
		\le 
		\frac{-|A|^2}{p^n-|A|}\,.
	\end{equation}
		Notice that by Proposition~\ref{prop:22} at most two 
	of the sets $A_k$ can be empty. 
	
	\smallskip

	{\it \hskip2em First Case: $p=11$}
        
	\smallskip
	
	By subtracting~\eqref{eq:2114} from $\sum_{k=0}^{10}|A_k|=|A|$ 
	and taking into account that $11^n-|A|\le 8\cdot 11^{n-1}$
	we infer 
		\[
		  \sum_{k=1}^{10}\left(1-\cos\frac{2k\pi}{11}\right)|A_k|
		  \ge
		  |A|+\frac{|A|^2}{11^n-|A|}
		  =
		  \frac{11^n|A|}{11^n-|A|}
		  \ge
		  \frac{11|A|}8\,.
	\]
		This contradicts Proposition~\ref{prop:24}. 
	
	\smallskip

	{\it \hskip2em Second Case: $p\ge 17$}
        
	\smallskip
	
	By Proposition~\ref{prop:23} there exists a real number $U$ 
	such that $\sum_{k\in \FF_p}\big||A_k|-U\big|\le 2p^{n-1}$. 
	Now we subtract $U\sum_{k=0}^{p-1}e^{2k\pi i/p}=0$ 
	from~\eqref{eq:2114} and take absolute values, thereby 
	obtaining
		\[
		\frac{(2m-1)^2p^{n-1}}{4m}
		\le 
		\frac{|A|^2}{p^n-|A|}
		\le 
		\bigg|\sum_{k=0}^{p-1}\bigl(|A_k|-U\bigr)e^{2k\pi i/p}\bigg|
		\le
		\sum_{k=0}^{p-1}\big||A_k|-U\big|
		\le
		2p^{n-1}\,,
	\]
		whence $4m(m-1)<(2m-1)^2\le 8m$. However, this gives $m<3$,
	which contradicts~$p\ge 17$.
\end{proof}    

\section{Concluding remarks}\label{sec:final}

\subsection{Small primes} 
As shown by Davydov and Tombak~\cite{DT} if for some $n\ge 4$ a 
sum-free set $A\subseteq \FF_2^n$ with $|A|>2^{n-1}+1$ is 
maximal with respect to inclusion, then it is periodic, i.e., 
there is a nonzero vector $v$ such that $A+v=A$. Furthermore, 
their results imply 
\[
	\sfr_k(\FF_2^n)=2^{n-2}+2^{n-3-k}
\]
whenever $k\ge 1$ and $n\ge k+3$. 
  
For $p=3$ Lev's periodicity conjecture from~\cite{VL05} asserts that 
if $A\subseteq \FF_3^n$ is a maximal sum-free set with 
$|A|>\frac12(3^{n-1}+1)$, then it is periodic. This has 
recently been proved in~\cite{R24c}. The classification of 
sum-free subsets $A\subseteq \FF_3^n$ with $|A|>\frac16\cdot 3^n$
provided there can be shown to entail 
\[
	\sfr_k(\FF_3^n)=\tfrac12(3^{n-1}+3^{n-k-1})
\]
whenever $k\ge 1$ and $n\ge k+2$.

Finally, for $p=5$ and $n\ge 3$ it is known 
that $\sfr_1(\FF_5^n)=28\cdot 5^{n-3}$. 
Moreover, there is a concrete set $\Lambda\subseteq \FF_5^3$
of size $28$ such that a set $A$ is in $\SFRR_1(\FF_5^n)$ 
if and only if it is of the form $\psi^{-1}[\Lambda]$ 
for some epimorphism $\psi\colon\FF_5^n\lra\FF_5^3$ 
(see~\cite{RZ24b}). Roughly speaking, this means that 
$\SFRR_1(\FF_5^n)$ is `very small' and consists of `highly 
structured' sets only. For this reason, we expect $\sfr_2(\FF_5^n)$
to be asymptotically smaller than $\sfr_1(\FF_5^n)$ and it 
would be very interesting to determine this number. 

\subsection{Primes of the form \texorpdfstring{$3m+1$}{{\it 3m+1}}}
Let $p=3m+1$ be a prime number. As shown by Rhemtulla and Street~\cite{RS70}, 
$\SFRR_0(\FF_p)$ consists of all sets isomorphic to $[m, 2m-1]$, 
$[m+1, 2m]$, or $[m, 2m+1]\sm\{m+1, 2m\}$. 
This yields $\sfr_1(\FF_7)=0$ and for $p\ge 13$ the set $[m-1, 2m-3]$
exemplifies $\sfr_1(\FF_p)=\sfr_0(\FF_p)-1=m-1$.

In~\cite{RS71} Rhemtulla and Street offer the following generalisation 
to higher dimensions. For $n\ge 2$ a set $A\subseteq \FF_p^n$
is in $\SFRR_0(\FF_p^n)$ if 
\begin{enumerate}
	\item[$\bullet$] either $A$ is isomorphic 
		to $[m+1, 2m]\times \FF_p^{n-1}$
\end{enumerate}
or there is a subspace $K$ of $\FF_p^{n-1}$ such that $A$ is 
isomorphic to one of 
\begin{enumerate}
	\item[$\bullet$] $\{m\}\times K\dcup [m+1, 2m-1]\times \FF_p^{n-1}
			\dcup \{2m\}\times(\FF_p^{n-1}\sm K)$
	\item[$\bullet$] or $\{m, 2m+1\}\times K
		\dcup \{m+1, 2m\}\times (\FF_p^{n-1}\sm K)
		\dcup [m+2, 2m-1]\times \FF_p^{n-1}$.
\end{enumerate}
Consequently, for every nonzero vector $x\in \FF_p^{n-1}$ 
the set 
\[
	\{(m, 0), (m, x)\}\dcup [m+1, 2m-1]\times \FF_p^{n-1}
	\dcup \{2m\}\times(\FF_p^{n-1}\sm\{0, x, 2x\})
\]
demonstrates $\sfr_1(\FF_p^n)=\sfr_0(\FF_p^n)-1=mp^{n-1}-1$.
The very small gap between $\sfr_1(\cdot)$ and $\sfr_0(\cdot)$ 
seems to indicate that determining $\sfr_k(\FF_p^n)$ for $k>1$
might not be very interesting when $p\equiv 1\pmod{3}$.

\subsection{More on primes of the form \texorpdfstring{$6m-1$}{{\it 6m-1}}}

Finally let us suppose again that $p=6m-1\ge 11$ is prime. 
For $p\ge 17$ the set $[2m-2, 4m-5]$ exemplifies 
$\sfr_2(\FF_p)=\sfr_1(\FF_p)-1=2m-2$, while a simple case analysis 
shows $\sfr_2(\FF_{11})=0$.
Similarly for $n\ge 2$ and every nonzero vector $x\in \FF_p^{n-1}$ 
the set 
\[
	\{(2m-1, 0), (2m-1, x)\}\dcup [2m, 4m-3]\times \FF_p^{n-1}
	\dcup \{4m-2\}\times (\FF_p^{n-1}\sm \{0, x, 2x\})
\]
witnesses $\sfr_2(\FF_p^n)=(2m-1)p^{n-1}-1=\sfr_1(\FF_p^n)-1$.
Accordingly for $k\ge 2$ we do not expect~$\sfr_k(\FF_p^n)$ 
to be very interesting. 

Summarising this discussion, the determination of $\sfr_2(\FF_5^n)$
seems to be the next natural problem in this area. It might also be
feasible to calculate $\sfr_1(G)$ for all finite abelian groups~$G$,
but we made no serious efforts in this direction. 

\subsection*{Acknowledgement} 
The second author would like to thank Leo Versteegen for introducing 
her to this topic and some early discussions.

\begin{bibdiv}
\begin{biblist}
\bib{Ba16}{article}{
   author={Balasubramanian, R.},
   author={Prakash, Gyan},
   author={Ramana, D. S.},
   title={Sum-free subsets of finite abelian groups of type III},
   journal={European J. Combin.},
   volume={58},
   date={2016},
   pages={181--202},
   issn={0195-6698},
   review={\MR{3530628}},
   doi={10.1016/j.ejc.2016.06.001},
}

\bib{Cau13}{article}{
	author={Cauchy, Augustin-Louis},
	title={Recherches sur les nombres},
	journal={J. \'Ecole Polytech.},
	volume={9},
	date={1813},
	pages={99--116},
}
	
\bib{Dav35}{article}{
   author={Davenport, H.},
   title={On the Addition of Residue Classes},
   journal={J. London Math. Soc.},
   volume={10},
   date={1935},
   number={1},
   pages={30--32},
   issn={0024-6107},
   review={\MR{1581695}},
   doi={10.1112/jlms/s1-10.37.30},
}

\bib{Dav47}{article}{
   author={Davenport, H.},
   title={A historical note},
   journal={J. London Math. Soc.},
   volume={22},
   date={1947},
   pages={100--101},
   issn={0024-6107},
   review={\MR{22865}},
   doi={10.1112/jlms/s1-22.2.100},
}

\bib{DT}{article}{
   author={Davydov\rn{(Davydov)}, A. A.},
   author={Tombak\rn{(Tombak)}, L. M.},
   title={\rn{Kvazisovershennye linei0nye dvoichnye kody s 
   rasstoj1niem 4 i polnye shapki v proektivnoi0 geometrii}},
   language={Russian},
   journal={\rn{Problemy peredachi informatsii}},
   volume={25},
   date={1989},
   number={4},
   pages={11--23},
   issn={0555-2923},
   translation={
      journal={Problems Inform. Transmission},
      volume={25},
      date={1989},
      number={4},
      pages={265--275 (1990)},
      issn={0032-9460},
   },
   review={\MR{1040020}},
}

\bib{EKR}{article}{
   author={Erd\H{o}s, P.},
   author={Ko, Chao},
   author={Rado, R.},
   title={Intersection theorems for systems of finite sets},
   journal={Quart. J. Math. Oxford Ser. (2)},
   volume={12},
   date={1961},
   pages={313--320},
   issn={0033-5606},
   review={\MR{140419}},
   doi={10.1093/qmath/12.1.313},
}

\bib{E65}{article}{
   author={Erd\H{o}s, P.},
   title={Extremal problems in number theory},
   conference={
      title={Proc. Sympos. Pure Math., Vol. VIII},
   },
   book={
      publisher={Amer. Math. Soc., Providence, RI},
   },
   date={1965},
   pages={181--189},
   review={\MR{174539}},
}

\bib{Gr05}{article}{
   author={Green, Ben},
   author={Ruzsa, Imre Z.},
   title={Sum-free sets in abelian groups},
   journal={Israel J. Math.},
   volume={147},
   date={2005},
   pages={157--188},
   issn={0021-2172},
   review={\MR{2166359}},
   doi={10.1007/BF02785363},
}

\bib{HK}{article}{
   author={Han, Jie},
   author={Kohayakawa, Yoshiharu},
   title={The maximum size of a non-trivial intersecting uniform 
   family that is not a subfamily of the Hilton-Milner family},
   journal={Proc. Amer. Math. Soc.},
   volume={145},
   date={2017},
   number={1},
   pages={73--87},
   issn={0002-9939},
   review={\MR{3565361}},
   doi={10.1090/proc/13221},
}

\bib{HM}{article}{
   author={Hilton, A. J. W.},
   author={Milner, E. C.},
   title={Some intersection theorems for systems of finite sets},
   journal={Quart. J. Math. Oxford Ser. (2)},
   volume={18},
   date={1967},
   pages={369--384},
   issn={0033-5606},
   review={\MR{219428}},
   doi={10.1093/qmath/18.1.369},
}

\bib{Kn53}{article}{
   author={Kneser, Martin},
   title={Absch\"{a}tzung der asymptotischen Dichte von Summenmengen},
   language={German},
   journal={Math. Z.},
   volume={58},
   date={1953},
   pages={459--484},
   issn={0025-5874},
   review={\MR{56632}},
   doi={10.1007/BF01174162},
}

\bib{Kn55}{article}{
   author={Kneser, Martin},
   title={Ein Satz \"{u}ber abelsche Gruppen mit Anwendungen auf die 
   	Geometrie der Zahlen},
   language={German},
   journal={Math. Z.},
   volume={61},
   date={1955},
   pages={429--434},
   issn={0025-5874},
   review={\MR{68536}},
   doi={10.1007/BF01181357},
}
			
\bib{VL05}{article}{
   author={Lev, Vsevolod F.},
   title={Large sum-free sets in ternary spaces},
   journal={J. Combin. Theory Ser. A},
   volume={111},
   date={2005},
   number={2},
   pages={337--346},
   issn={0097-3165},
   review={\MR{2156218}},
   doi={10.1016/j.jcta.2005.01.004},
}
	      
\bib{VL23}{article}{
   author={Lev, Vsevolod F.},
   title={Large sum-free sets in $\ZZ_5^n$},
   journal={J. Combin. Theory Ser. A},
   volume={205},
   date={2024},
   pages={Paper No. 105865, 9},
   issn={0097-3165},
   review={\MR{4700167}},
   doi={10.1016/j.jcta.2024.105865},
}

\bib{LLS}{article}{
   author={Lev, Vsevolod F.},
   author={\L uczak, Tomasz},
   author={Schoen, Tomasz},
   title={Sum-free sets in abelian groups},
   journal={Israel J. Math.},
   volume={125},
   date={2001},
   pages={347--367},
   issn={0021-2172},
   review={\MR{1853817}},
   doi={10.1007/BF02773386},
}

\bib{Joanna}{article}{
   author={Polcyn, Joanna},
   author={Ruci\'{n}ski, Andrzej},
   title={A hierarchy of maximal intersecting triple systems},
   journal={Opuscula Math.},
   volume={37},
   date={2017},
   number={4},
   pages={597--608},
   issn={1232-9274},
   review={\MR{3647803}},
   doi={10.7494/OpMath.2017.37.4.597},
}

\bib{RS70}{article}{
   author={Rhemtulla, A. H.},
   author={Street, Anne Penfold},
   title={Maximal sum-free sets in finite abelian groups},
   journal={Bull. Austral. Math. Soc.},
   volume={2},
   date={1970},
   pages={289--297},
   issn={0004-9727},
   review={\MR{263920}},
   doi={10.1017/S000497270004199X},
}

\bib{RS71}{article}{
   author={Rhemtulla, A. H.},
   author={Street, Anne Penfold},
   title={Maximal sum-free sets in elementary abelian $p$-groups},
   journal={Canad. Math. Bull.},
   volume={14},
   date={1971},
   pages={73--80},
   issn={0008-4395},
   review={\MR{292936}},
   doi={10.4153/CMB-1971-014-2},
}
	
\bib{TV17}{article}{
   author={Tao, Terence},
   author={Vu, Van},
   title={Sum-free sets in groups: a survey},
   journal={J. Comb.},
   volume={8},
   date={2017},
   number={3},
   pages={541--552},
   issn={2156-3527},
   review={\MR{3668880}},
   doi={10.4310/JOC.2017.v8.n3.a7},
}

\bib{R24c}{article}{
	author={Reiher, Chr.},
	title={On Lev's periodicity conjecture},
	note={Unpublished manuscript},
}

\bib{RZ24b}{article}{
	author={Reiher, Chr.},
	author={Zotova, Sofia},
	title={Large sum-free sets in finite vector spaces II.},
	note={Unpublished manuscript},
}

\bib{Schur}{article}{
	author={Schur, Issai},
	title={\"{U}ber die Kongruenz $x^m+y^m\equiv z^m \pmod{p}$},
	journal={Deutsche Math. Ver.},
	volume={25},
	date={1916},
	pages={114--117},
} 

\bib{LV23}{article}{
	author={Versteegen, Leo},
	title={The structure of large sum-free sets in $\FF_p^n$},
	eprint={2303.00828},
}
   
\bib{Vos56a}{article}{
   author={Vosper, A. G.},
   title={The critical pairs of subsets of a group of prime order},
   journal={J. London Math. Soc.},
   volume={31},
   date={1956},
   pages={200--205},
   issn={0024-6107},
   review={\MR{77555}},
   doi={10.1112/jlms/s1-31.2.200},
}
	
\bib{Vos56b}{article}{
   author={Vosper, A. G.},
   title={Addendum to ``The critical pairs of subsets of a group of 
   	prime order''},
   journal={J. London Math. Soc.},
   volume={31},
   date={1956},
   pages={280--282},
   issn={0024-6107},
   review={\MR{78368}},
   doi={10.1112/jlms/s1-31.3.280},
}

\bib{Yap}{article}{
   author={Yap, H. P.},
   title={Maximal sum-free sets of group elements},
   journal={J. London Math. Soc.},
   volume={44},
   date={1969},
   pages={131--136},
   issn={0024-6107},
   review={\MR{232844}},
   doi={10.1112/jlms/s1-44.1.131},
}

\end{biblist}
\end{bibdiv}
\end{document}